\newcommand{\sg}{\textnormal{sg}}
\newtheorem{theorem}{Theorem}
\newtheorem{lemma}[theorem]{Lemma}
\newtheorem{corollary}[theorem]{Corollary}
\newtheorem{proposition}[theorem]{Proposition}
\theoremstyle{remark}
\theoremstyle{definition}
\numberwithin{theorem}{section} 
\numberwithin{equation}{section}
\numberwithin{example}{section}
\title{A heuristic guide to evaluating triple-sums}
\author{Eric T. Mortenson}
\begin{document}

\date{21 December 2020}

\subjclass[2010]{11B65, 11F27}

\keywords{Hecke-type triple-sums, Appell--Lerch functions, mock theta functions, false theta functions}

\begin{abstract}
Using a heuristic that relates Appell--Lerch functions to divergent partial theta functions one can expand Hecke-type double-sums in terms of Appell--Lerch functions.  We give examples where the heuristic can be used as a guide to evaluate analogous triple-sums in terms of Appell--Lerch functions or false theta functions.
\end{abstract}

\address{Saint Petersburg State University}
\email{etmortenson@gmail.com}
\maketitle
\setcounter{section}{-1}


\section{Notation}

 Let $q$ be a nonzero complex number with $|q|<1$ and define $\mathbb{C}^*:=\mathbb{C}-\{0\}$.  Recall
\begin{gather}
(x)_n=(x;q)_n:=\prod_{i=0}^{n-1}(1-q^ix), \ \ (x)_{\infty}=(x;q)_{\infty}:=\prod_{i=0}^{\infty}(1-q^ix),\notag \\
j(x;q):=(x)_{\infty}(q/x)_{\infty}(q)_{\infty}=\sum_{n=-\infty}^{\infty}(-1)^nq^{\binom{n}{2}}x^n,\notag
\end{gather}
and
\begin{equation*}
j(x_1,x_2,\dots,x_n;q):=j(x_1;q)j(x_2,q)\cdots j(x_n;q),
\end{equation*}
where in the penultimate line the equivalence of product and sum follows from Jacobi's triple product identity.    Let $a$ and $m$ be integers with $m$ positive.  Define
\begin{gather*}
J_{a,m}:=j(q^a;q^m), \ \ J_m:=J_{m,3m}=\prod_{i=1}^{\infty}(1-q^{mi}), \ {\text{and }}\overline{J}_{a,m}:=j(-q^a;q^m).
\end{gather*}
We will use the following definition of an Appell--Lerch function \cite{HM}
\begin{equation}
m(x,q,z):=\frac{1}{j(z;q)}\sum_{r=-\infty}^{\infty}\frac{(-1)^rq^{\binom{r}{2}}z^r}{1-q^{r-1}xz}.\label{equation:mdef-eq}
\end{equation}

\section{Introduction}
Appell--Lerch functions are the building blocks of Ramanujan's classical mock theta functions \cite[Section 5]{HM}.  For two of Ramanujan's fifth order mock theta functions \cite[Section 5]{HM}, \cite{AG, H1}:
{\allowdisplaybreaks \begin{align}
\chi_0(q):&=\sum_{n\ge 0}\frac{q^n}{(q^{n+1})_n}= 2-3m(q^7,q^{15},q^9)-3q^{-1}m(q^2,q^{15},q^4)+\frac{2J_5^2J_{2,5}}{J_{1,5}^2},\label{equation:5th-chi0(q)}\\
\chi_1(q):&=\sum_{n\ge 0}\frac{q^n}{(q^{n+1})_{n+1}}= -3q^{-1}m(q^4,q^{15},q^3)-3q^{-2}m(q,q^{15},q^2)-\frac{2J_5^2J_{1,5}}{J_{2,5}^2}. \label{equation:5th-chi1(q)}
\end{align}}%

In recent work \cite{HM}, we used a heuristic (\ref{equation:mxqz-heuristic}) to express Hecke-type double-sums of the form \cite[$(1.4)$]{HM}
\begin{equation}
\Big (\sum_{r,s\ge 0}-\sum_{r,s<0}\Big )(-1)^{r+s}x^ry^sq^{a\binom{r}{2}+brs+c\binom{s}{2}}
\label{equation:fabc-def-intro},
\end{equation}
where $a$, $b$, $c$ are strictly positive integers, in terms of theta functions and the $m(x,q,z)$ function.  As an example, a special case of more general results from \cite[$(1.7)$]{HM} reads
\begin{align}
&\Big (\sum_{r,s\ge 0}-\sum_{r,s<0}\Big )(-1)^{r+s}x^ry^sq^{\binom{r}{2}+2rs+\binom{s}{2}}\label{equation:f121}\\
&\ \ \ \ \ =j(y;q)m\Big (\frac{q^2x}{y^2},q^3,-1\Big )+j(x;q)m\Big (\frac{q^2y}{x^2},q^3,-1\Big )
- \frac{yJ_{3}^3j(-x/y;q)j(q^2xy;q^3)}{\overline{J}_{0,3}j(-qy^2/x,-qx^2/y;q^3)}.\notag 
\end{align}

Objects that could be thought of as Hecke-type triple-sums have been the subject of recent work.  Zwegers discovered that \cite[Theorem $1$]{Zw}
{\allowdisplaybreaks \begin{align}
2-\frac{1}{(q)_{\infty}^2}\Big ( \sum_{k,l,m\ge 0}+\sum_{k,l,m<0}\Big )(-1)^{k+l+m}
q^{\binom{k}{2}+\binom{l}{2}+\binom{m}{2}+2kl+2km+2lm+k+l+m}&=\chi_0(q),\label{equation:chi0-triple}\\
\frac{1}{(q)_{\infty}^2}\Big ( \sum_{k,l,m\ge 0}+\sum_{k,l,m<0}\Big )(-1)^{k+l+m}
q^{\binom{k}{2}+\binom{l}{2}+\binom{m}{2}+2kl+2km+2lm+2k+2l+2m}&=\chi_1(q),\label{equation:chi1-triple}
\end{align}}%
which suggests considering the following building block for triple-sums:
\begin{equation}
\mathfrak{g}_{a,b,c,d,e,f}(x,y,z,q):=\Big ( \sum_{r,s,t\ge 0}+\sum_{r,s,t<0}\Big )(-1)^{r+s+t}x^{r}y^{s}z^{t}
q^{a\binom{r}{2}+brs+c\binom{s}{2}+drt+est+f\binom{t}{2}},\label{equation:triple-sum-def}
\end{equation}
where $a$, $b$, $c$, $d$, $e$, and $f$ are strictly positive integers.  For related examples, see \cite{BRZ, M2017}.  Indefinite binary theta series for $\chi_0(q)$ and $\chi_1(q)$ can be found in \cite{G,Za}.

Kim and Lovejoy \cite{KL} recently found examples of triple-sums that evaluate to false theta functions.  False theta functions are theta functions but with the wrong signs \cite{AW}.  In particular, they found 

{\allowdisplaybreaks
\begin{align}
 \mathfrak{g}_{1,7,1,1,1,1}&(q^2,q^3,q,q)+q^4\mathfrak{g}_{1,7,1,1,1,1}(q^6,q^7,q^2,q)
=J_1^2\sum_{r=0}^{\infty}(-1)^rq^{3r(r+1)/2},\label{equation:KL-id1}\\
\mathfrak{g}_{1,5,1,1,1,1}&(q^2,q^2,q,q)+q^3\mathfrak{g}_{1,5,1,1,1,1}(q^5,q^5,q^2,q) 
 =J_1J_{1,2}\sum_{r = 0}^{\infty}(-1)^rq^{3r^2+2r}(1+q^{2r+1}).\label{equation:KL-id2}
 \end{align}}%


In Section \ref{section:prelim}, we introduce a few basic facts and preliminary results.  In Section \ref{section:preview}, we remind the reader how the heuristic is used to understand the (mock) modularity of double-sums.   In Section \ref{section:obtain-the-triple}, we obtain the relation (\ref{equation:HeckeTriple-id}) that will be used to understand the modularity of triple-sums.   We will also introduce a few basic facts about triple-sums.  

In Section \ref{section:Zwegers}, we consider the mock theta functions $\chi_0(q)$ and $\chi_1(q)$.  We use the relation (\ref{equation:HeckeTriple-id}) to demonstrate how the heuristic (\ref{equation:mxqz-heuristic}) guides us from their respective triple-sums in $(\ref{equation:chi0-triple})$ and $(\ref{equation:chi1-triple})$ to their Appell--Lerch sum expressions modulo a theta function, see $(\ref{equation:5th-chi0(q)})$ and $(\ref{equation:5th-chi1(q)})$.  In Section \ref{section:KimLovejoy}, we demonstrate how the relation (\ref{equation:HeckeTriple-id}) guides us from Kim and Lovejoy's two triple-sums in $(\ref{equation:KL-id1})$ and $(\ref{equation:KL-id2})$ to their respective false theta function forms.   The work in Sections \ref{section:Zwegers} and  \ref{section:KimLovejoy} is experimental and does not contain proofs,  but perhaps more general formulas such as those found in \cite{BRZ, HM, M2017} are lurking in the background.

In Section \ref{section:exploration}, we use triple-sum relations from Section \ref{section:obtain-the-triple} in order to prove corollaries to Identities (\ref{equation:chi0-triple}) and (\ref{equation:chi1-triple}).  We show
\begin{corollary}\label{corollary:newIds} We have
\begin{gather}
\mathfrak{g}_{1,2,1,2,2,1}(q^3,q^3,q^3,q)=0,\label{equation:newId-1}\\
\mathfrak{g}_{1,2,1,2,2,1}(q,q,q^2,q)=J_{1}^2,\label{equation:newId-2}\\
\mathfrak{g}_{1,2,1,2,2,1}(q,q^2,q^2,q)=J_{1}^2\chi_0(q),\label{equation:newId-3}\\
\mathfrak{g}_{1,2,1,2,2,1}(q,q,q^3,q)=J_1^2(q\chi_0(q)+1-q),\label{equation:newId2-1}\\
\mathfrak{g}_{1,2,1,2,2,1}(q,q^2,q^3,q)=J_1^{2}(q\chi_1(q)+1).\label{equation:newId2-2}
\end{gather}
\end{corollary}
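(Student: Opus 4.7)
The two inputs are Zwegers' identities (\ref{equation:chi0-triple}) and (\ref{equation:chi1-triple}), which after absorbing the linear factors $q^{k+l+m}$ and $q^{2(k+l+m)}$ into the $x,y,z$ slots of (\ref{equation:triple-sum-def}) translate into
\begin{equation*}
\mathfrak{g}_{1,2,1,2,2,1}(q,q,q,q)=J_{1}^{2}\bigl(2-\chi_{0}(q)\bigr),\qquad \mathfrak{g}_{1,2,1,2,2,1}(q^{2},q^{2},q^{2},q)=J_{1}^{2}\chi_{1}(q).
\end{equation*}
These two evaluations are the only analytic input; the rest of the corollary should follow by purely algebraic manipulation of the defining series.

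The plan is to apply the triple-sum relation (\ref{equation:HeckeTriple-id}) of Section \ref{section:obtain-the-triple} as a shift identity that connects $\mathfrak{g}_{1,2,1,2,2,1}(x,y,z,q)$ to the same function at a neighbouring triple of arguments, modulo an explicit theta-function residual. It is useful first to rewrite the targets so that the required residuals appear cleanly: identity (\ref{equation:newId2-1}) rearranges to
\begin{equation*}
\mathfrak{g}_{1,2,1,2,2,1}(q,q,q^{3},q)+q\,\mathfrak{g}_{1,2,1,2,2,1}(q,q,q,q)=(1+q)J_{1}^{2},
\end{equation*}
and (\ref{equation:newId2-2}) rearranges to
\begin{equation*}
\mathfrak{g}_{1,2,1,2,2,1}(q,q^{2},q^{3},q)-q\,\mathfrak{g}_{1,2,1,2,2,1}(q^{2},q^{2},q^{2},q)=J_{1}^{2},
\end{equation*}
exhibiting them as single-shift relations with $J_{1}^{2}$-type residuals. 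Identities (\ref{equation:newId-2}) and (\ref{equation:newId-3}) likewise reduce to one-step residuals measured against $\mathfrak{g}_{1,2,1,2,2,1}(q,q,q,q)$. Identity (\ref{equation:newId-1}) is the deepest: two applications of the shift should express $\mathfrak{g}_{1,2,1,2,2,1}(q^{3},q^{3},q^{3},q)$ in terms of $\mathfrak{g}_{1,2,1,2,2,1}(q,q,q,q)$, $\mathfrak{g}_{1,2,1,2,2,1}(q^{2},q^{2},q^{2},q)$, and accumulated theta residuals, and the vanishing then asserts that these quantities combine to zero once Zwegers' two values are substituted.

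The main obstacle will be bookkeeping the residuals. The shift relation (\ref{equation:HeckeTriple-id}) mixes contributions from the $r,s,t\ge 0$ and $r,s,t<0$ regions of (\ref{equation:triple-sum-def}), and each application produces a three-variable theta quotient that must be consolidated into the $J_{a,m}$ products of Section \ref{section:prelim} using Jacobi's triple product. Two sanity checks are available throughout: the full permutation symmetry of $\mathfrak{g}_{1,2,1,2,2,1}$ in $(x,y,z)$ constrains the possible form of each residual, and a low-order $q$-series check against the simplest case (\ref{equation:newId-2}) pins down the normalisation of the shift-relation constants before the deeper identities are attempted.
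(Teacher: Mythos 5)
Your overall strategy---anchor everything to the two Zwegers evaluations $\mathfrak{g}_{1,2,1,2,2,1}(q,q,q,q)=J_1^2(2-\chi_0(q))$ and $\mathfrak{g}_{1,2,1,2,2,1}(q^2,q^2,q^2,q)=J_1^2\chi_1(q)$ and then move between arguments by exact shift identities with theta residuals built from $\mathfrak{f}_{1,2,1}$---is close in spirit to the paper's, and your rearranged forms of (\ref{equation:newId2-1}) and (\ref{equation:newId2-2}) are exactly the intermediate identities that appear there. But two things in your plan are off. First, the relation you propose to iterate, (\ref{equation:HeckeTriple-id}), is the \emph{heuristic} limit relation, valid only ``mod theta''; it cannot produce the exact evaluations claimed in the corollary. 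The tool you actually need is the exact finite-shift identity (\ref{equation:g-shift}) (equivalently (\ref{equation:generic-shift})), in which every residual is an explicit $\mathfrak{f}_{1,2,1}$ or theta term and every $\mathfrak{f}_{1,2,1}$ value needed is supplied exactly by (\ref{equation:f121}).

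Second, and more seriously, shifts alone cannot carry out the reductions you describe. Writing $(x,y,z)=(q^{e_1},q^{e_2},q^{e_3})$, the shift (\ref{equation:g-shift}) changes $(e_1,e_2,e_3)$ by $(R+2S+2T,\,2R+S+2T,\,2R+2S+T)$, whose sum is $5(R+S+T)$; hence $e_1+e_2+e_3 \bmod 5$ is invariant under any chain of shifts. Since $(3,3,3)$ has sum $9\equiv 4$, $(1,1,1)$ has sum $3$, and $(2,2,2)$ has sum $6\equiv 1$, your plan to obtain (\ref{equation:newId-1}) by ``two applications of the shift'' from the Zwegers values is impossible, as is reducing (\ref{equation:newId-2}) (sum $4$) or (\ref{equation:newId-3}) (sum $5\equiv 0$) to $\mathfrak{g}_{1,2,1,2,2,1}(q,q,q,q)$ by shifts alone. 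The missing ingredient is the flip identity (\ref{equation:triple-flip}), which sends $(e_1,e_2,e_3)\mapsto(6-e_1,6-e_2,6-e_3)$ and so changes the residue class. It is also the whole proof of (\ref{equation:newId-1}): the point $(q^3,q^3,q^3)$ is the fixed point of the flip, giving $\mathfrak{g}_{1,2,1,2,2,1}(q^3,q^3,q^3,q)=-\mathfrak{g}_{1,2,1,2,2,1}(q^3,q^3,q^3,q)=0$ with no input from Zwegers at all; (\ref{equation:newId-2}) then follows by a single shift to this vanishing value, not to $\mathfrak{g}_{1,2,1,2,2,1}(q,q,q,q)$, and (\ref{equation:newId-3}) and (\ref{equation:newId2-1}) each need a shift to $(q^5,q^5,q^5)$ followed by a flip back to $(q,q,q)$. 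Only (\ref{equation:newId2-2}) is reachable by a pure shift (with $(R,S,T)=(-1,0,1)$), and there your one-step relation is correct.
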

One wonders how triple-sums whose parameters differ slightly from the left-hand sides of (\ref{equation:chi0-triple}) and (\ref{equation:chi1-triple}) might behave.  In Section \ref{section:conjecture}, we demonstrate how our heuristic methods lead us to suspect
\begin{align}
\mathfrak{g}_{1,3,1,3,3,1}&(q,q,q,q)\label{equation:g131331-conjecture}\\
&= 3J_{1,2}\overline{J}_{3,8} m ( -q^{27},q^{56},-1  )
 +3q^{-2}J_{1,2}\overline{J}_{3,8} m ( -q^{13},q^{56},-1  ) \notag\\
 &\ \ \ \ \  -3q^{-7}J_{1,2}\overline{J}_{1,8} m ( -q^{-1},q^{56},-1  )
 -3q^{-16}J_{1,2}\overline{J}_{1,8} m ( -q^{-15},q^{56},-1  ) + \theta(q),
 \notag
 \end{align}
where $\theta(q)$ is a yet to be determined weight $3/2$ theta function.

\section{Preliminaries}\label{section:prelim}
We will frequently use the following product rearrangements without mention
{\allowdisplaybreaks \begin{gather}
\overline{J}_{0,1}=2\overline{J}_{1,4}=\frac{2J_2^2}{J_1},  \overline{J}_{1,2}=\frac{J_2^5}{J_1^2J_4^2},   J_{1,2}=\frac{J_1^2}{J_2},   \overline{J}_{1,3}=\frac{J_2J_3^2}{J_1J_6},\notag\\
  J_{1,4}=\frac{J_1J_4}{J_2},   J_{1,6}=\frac{J_1J_6^2}{J_2J_3},   \overline{J}_{1,6}=\frac{J_2^2J_3J_{12}}{J_1J_4J_6}.\notag
\end{gather}}%
We have the general identities:
\begin{subequations}
{\allowdisplaybreaks \begin{gather}
j(q^n x;q)=(-1)^nq^{-\binom{n}{2}}x^{-n}j(x;q), \ \ n\in\mathbb{Z},\label{equation:j-elliptic}\\
j(x;q)=j(q/x;q)=-xj(x^{-1};q)\label{equation:j-inv},\\
j(x;q)={J_1}j(x,qx,\dots,q^{n-1}x;q^n)/{J_n^n} \ \ {\text{if $n\ge 1$,}}\label{j-mod-inc}\\
j(x^n;q^n)={J_n}j(x,\zeta_nx,\dots,\zeta_n^{n-1}x;q^n)/{J_1^n} \ \ {\text{if $n\ge 1$,}}\label{j-mod-dec}
\end{gather}}%
\end{subequations}
\noindent  where $\zeta_n$ is an $n$-th primitive root of unity.   We also have from \cite[Theorem 1.1]{H1}
\begin{equation}
j(-x,q)j(y,q)+j(x,q)j(-y,q)=2j(xy,q^2)j(qx^{-1}y,q^2).\label{equation:H1Thm1.2B}
\end{equation}

The Appell--Lerch function $m(x,q,z)$ satisfies several functional equations and identities, which we collect in the form of a proposition.  

\begin{proposition}  For generic $x,z\in \mathbb{C}^*$
{\allowdisplaybreaks \begin{subequations}
\begin{gather}
m(x,q,z)=m(x,q,qz),\label{equation:mxqz-fnq-z}\\
m(x,q,z)=x^{-1}m(x^{-1},q,z^{-1}),\label{equation:mxqz-flip}\\
m(qx,q,z)=1-xm(x,q,z),\label{equation:mxqz-fnq-x}\\
m(x,q,z_1)-m(x,q,z_0)=\frac{z_0J_1^3j(z_1/z_0;q)j(xz_0z_1;q)}{j(z_0;q)j(z_1;q)j(xz_0;q)j(xz_1;q)}.\label{equation:changing-z-theorem}
\end{gather}
\end{subequations}}
\end{proposition}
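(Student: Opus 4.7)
The plan is to derive the functional equations (\ref{equation:mxqz-fnq-z})--(\ref{equation:mxqz-fnq-x}) by direct manipulation of the defining series (\ref{equation:mdef-eq}), and to prove the shifting formula (\ref{equation:changing-z-theorem}) via an elliptic-function argument built on those three. I expect (\ref{equation:changing-z-theorem}) to be the main obstacle.

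For (\ref{equation:mxqz-fnq-z}), I would substitute $r \mapsto r+1$ in the series for $m(x,q,qz)$, use the exponent identity $\binom{r}{2} + r = \binom{r+1}{2}$, and apply $j(qz;q) = -z^{-1} j(z;q)$ from (\ref{equation:j-elliptic}) to recover $m(x,q,z)$. For (\ref{equation:mxqz-flip}), I would substitute $r \mapsto 1-r$ in $m(x^{-1},q,z^{-1})$, exploit $\binom{1-r}{2} = \binom{r}{2}$ together with the algebraic rewrite $(1 - q^{-r}x^{-1}z^{-1})^{-1} = -q^r xz/(1 - q^r xz)$, reindex once more, and invoke $j(z^{-1};q) = -z^{-1} j(z;q)$ from (\ref{equation:j-inv}) to recognise the result as $x \cdot m(x,q,z)$. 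For (\ref{equation:mxqz-fnq-x}), I would establish the equivalent form $xm(x,q,z) + m(qx,q,z) = 1$ by applying the partial-fraction decomposition
\[
\frac{x}{1 - q^{r-1}xz} \;=\; \frac{q^{-(r-1)}z^{-1}}{1 - q^{r-1}xz} \;-\; q^{-(r-1)}z^{-1}
\]
inside the series for $xm(x,q,z)$; after reindexing $s = r - 1$ the first piece collapses to $-m(qx,q,z)$, while the second piece is evaluated to $1$ by Jacobi's triple product $\sum_s (-1)^s q^{\binom{s}{2}} z^s = j(z;q)$.

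For (\ref{equation:changing-z-theorem}) I would fix $x, z_0 \in \mathbb{C}^*$ generic and view both sides as meromorphic functions of $z_1$. Applying (\ref{equation:mxqz-fnq-z}) shows the left-hand side is $q$-invariant in $z_1$, and applying (\ref{equation:j-elliptic}) to each of the four theta factors shows the right-hand side is likewise $q$-invariant: under $z_1 \mapsto qz_1$ the numerator acquires a factor $(-z_0/z_1) \cdot (-(xz_0 z_1)^{-1})$ that precisely cancels the $(-z_1^{-1})\cdot(-(xz_1)^{-1})$ picked up by the denominator. The candidate simple poles in a fundamental domain lie at $z_1 = 1$ (from $j(z_1;q)$) and $z_1 = x^{-1}$ (from $j(xz_1;q)$ on the right and from the $r = 1$ summand of the defining series on the left). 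At $z_1 = 1$, the right-hand side residue equals $1/j(x;q)$ after using $j'(1) = -J_1^3$ and $j(z_0^{-1};q) = -z_0^{-1} j(z_0;q)$; matching the left-hand side residue reduces to the classical partial-fraction identity
\[
\frac{J_1^3}{j(x;q)} \;=\; \sum_{s=-\infty}^{\infty} \frac{(-1)^s q^{\binom{s+1}{2}}}{1 - q^s x},
\]
which I would either quote or verify by residue analysis at each $x \in q^{\mathbb{Z}}$. A parallel computation handles $z_1 = x^{-1}$. Once all poles cancel and $q$-invariance in $z_1$ is in hand, the difference $\mathrm{LHS}-\mathrm{RHS}$ descends to a holomorphic function on the torus $\mathbb{C}^*/q^{\mathbb{Z}}$ and is therefore constant; specialising $z_1 = z_0$ then forces both sides to vanish (the left trivially, the right because $j(1;q) = 0$), so the constant is zero. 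The delicate step is the residue matching at $z_1 = 1$, which rests on the classical partial-fraction expansion cited above.
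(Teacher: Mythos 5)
The paper itself offers no proof of this proposition: it is imported verbatim from \cite{HM} (Proposition 3.1 and Theorem 3.3 there), so there is nothing internal to compare your argument against. Your proposal is, however, correct, and it is essentially the standard proof. The series manipulations for (\ref{equation:mxqz-fnq-z})--(\ref{equation:mxqz-fnq-x}) all check out: the exponent identities $\binom{r}{2}+r=\binom{r+1}{2}$ and $\binom{1-r}{2}=\binom{r}{2}$, the partial-fraction split of $x/(1-q^{r-1}xz)$, and the uses of (\ref{equation:j-elliptic}) and (\ref{equation:j-inv}) are all applied correctly, and the leftover sum in the third identity does collapse to $j(z;q)/j(z;q)=1$. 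For (\ref{equation:changing-z-theorem}), the elliptic-function argument is sound: both sides are invariant under $z_1\mapsto qz_1$ (your cancellation of the factors $1/(xz_1^2)$ is right), the only poles in a fundamental annulus for generic $x$ are the simple ones at $z_1\in q^{\mathbb{Z}}$ and $z_1\in x^{-1}q^{\mathbb{Z}}$, the residues match (I verified both give $1/j(x;q)$ at $z_1=1$ and $-x^{-1}/j(x;q)$ at $z_1=x^{-1}$), and specializing $z_1=z_0$ kills the resulting constant since $j(1;q)=0$. The one genuinely external ingredient, which you correctly isolate as the crux, is the classical partial-fraction expansion $\sum_{s}(-1)^sq^{\binom{s+1}{2}}/(1-q^sx)=J_1^3/j(x;q)$; this is Jacobi's expansion of the reciprocal theta function and is itself proved in \cite{HM}, so quoting it is legitimate, though if you prove it by the same residue method you should note that the elliptic-function normalization there requires multiplying through by $j(x;q)$ and identifying the resulting entire quasi-periodic function rather than a bare Liouville step. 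With that caveat your write-up would serve as a complete, self-contained proof of the proposition.
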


The $m(x,q,z)$ function has a useful (slightly rewritten) functional equation
\begin{equation}
m(x,q,z)=1-q^{-1}xm(q^{-1}x,q,z)\label{equation:mxqz-altdef0intro}.
\end{equation}
In recent work \cite[Section $3$]{HM} we introduced a heuristic point of view which guides our study of the Appell--Lerch function $m(x,q,z)$ and Hecke-type double-sums.  If we iterate (\ref{equation:mxqz-altdef0intro}), we obtain
\begin{equation}
m(x,q,z)\sim \sum_{r\ge 0}(-1)^rq^{-\binom{r+1}{2}}x^r,\label{equation:mxqz-heuristic}
\end{equation}
where `$\sim$'  means mod theta.  We cannot use an equal sign here, because the series on the right diverges for $|q|<1$.  However, it is often useful to think of $m(x,q,z)$ as a partial theta series with $q$ replaced by $q^{-1}$.    Property (\ref{equation:changing-z-theorem}) says that changing $z$ only adjusts the theta function, hence the expression $m(x,q,*)$ when working mod theta.

We define
\begin{equation}
\mathfrak{f}_{a,b,c}(x,y,q):=\Big (\sum_{r,s\ge 0}-\sum_{r,s<0}\Big )(-1)^{r+s}x^ry^sq^{a\binom{r}{2}+brs+c\binom{s}{2}},
\label{equation:fabc-def-prelim}\\
\end{equation}
and note that we can also write
{\allowdisplaybreaks \begin{align}
\mathfrak{f}_{a,b,c}(x,y,q)& =\sum_{\sg(r)=\sg(s)}\sg(r)(-1)^{r+s}x^ry^sq^{a\binom{r}{2}+brs+c\binom{s}{2}},\\
&=\sum_{r,s}\sg(r,s)(-1)^{r+s}x^ry^sq^{a\binom{r}{2}+brs+c\binom{s}{2}},
\end{align}}%
where 
\begin{equation}
\sg(r):=
\begin{cases}
1 & r\ge0,\\
-1 & r<0,
\end{cases}
\end{equation}
and
\begin{equation}
\sg(r,s):=(\sg(r)+\sg(s))/2.
\end{equation}

Our Hecke-type double-sum notation has several useful properties.  We will later determine analogous properties for triple-sums.  For $b<a$, we follow the usual convention:
\begin{equation}
\sum_{r=a}^{b}c_{r}:=-\sum_{r=b+1}^{a-1}c_r.
\end{equation}

\begin{proposition}[Proposition 6.2 \cite{HM}] \label{proposition:f-flip} For $x,y\in\mathbb{C}^{*}$
\begin{align}
\mathfrak{f}_{a,b,c}(x,y,q)&=-\frac{q^{a+b+c}}{xy}\mathfrak{f}_{a,b,c}(q^{2a+b}/x,q^{2c+b}/y,q).\label{equation:fabc-flip}
\end{align}
\end{proposition}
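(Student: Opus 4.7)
The plan is to prove the flip identity by performing the change of index $(r,s)\mapsto(-r-1,-s-1)$ in the series defining $\mathfrak{f}_{a,b,c}(q^{2a+b}/x,q^{2c+b}/y,q)$. This substitution is a bijection that interchanges $\{r,s\ge 0\}$ with $\{r,s<0\}$, and the exponent $a\binom{r}{2}+brs+c\binom{s}{2}$ is invariant under it up to explicit linear corrections, making it the natural choice.

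The first step is to write out the right-hand side of the proposed identity using (\ref{equation:fabc-def-prelim}) and apply the substitution. The region swap contributes an overall sign $-1$ to the outer difference $\sum_{r,s\ge 0}-\sum_{r,s<0}$, while $(-1)^{r+s}$ is preserved because $r+s$ shifts by the even integer $-2$. The main algebraic ingredients are
$$\binom{-r-1}{2}=\binom{r}{2}+2r+1 \qquad\text{and}\qquad (-r-1)(-s-1)=rs+r+s+1,$$
after which the $q$-exponent coming from the quadratic form expands to $a\binom{r}{2}+brs+c\binom{s}{2}+(2a+b)r+(2c+b)s+(a+b+c)$.

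Next, the linear terms $(2a+b)r$ and $(2c+b)s$ are absorbed precisely by the factors $(q^{2a+b}/x)^{-r-1}$ and $(q^{2c+b}/y)^{-s-1}$, which also produce the monomial $xy\cdot x^{r}y^{s}$ and a residual $q$-power of $-(a+b+c)$. Collecting everything yields
$$\mathfrak{f}_{a,b,c}(q^{2a+b}/x,q^{2c+b}/y,q)=-xy\,q^{-(a+b+c)}\,\mathfrak{f}_{a,b,c}(x,y,q),$$
which rearranges to (\ref{equation:fabc-flip}).

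There is no serious obstacle; the entire argument is a routine reindexing. The only step that requires care is bookkeeping the signs — the region swap, the parity of $(-1)^{r+s}$, and the monomial shifts contributing $x^{r+1}$ and $y^{s+1}$ — so that exactly one overall minus sign emerges and the multiplier $-q^{a+b+c}/(xy)$ appears with the correct sign.
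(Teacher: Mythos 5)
Your proof is correct, and the reindexing $(r,s)\mapsto(-r-1,-s-1)$ is exactly the standard argument for this identity (the paper itself cites Proposition 6.2 of [HM] without reproducing the proof, and that is the argument used there). All the bookkeeping — the region swap sign, the invariance of $(-1)^{r+s}$, the expansion of the quadratic form, and the cancellation of the linear terms leaving the factor $-xy\,q^{-(a+b+c)}$ — checks out.
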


\begin{proposition}[Proposition 6.3 \cite{HM}]  \label{proposition:f-functionaleqn}For $x,y\in\mathbb{C}^*$ and $\ell, k \in \mathbb{Z}$
\begin{align}
\mathfrak{f}_{a,b,c}(x,y,q)&=(-x)^{\ell}(-y)^kq^{a\binom{\ell}{2}+b\ell k+c\binom{k}{2}}\mathfrak{f}_{a,b,c}(q^{a\ell+bk}x,q^{b\ell+ck}y,q) \label{equation:f-shift}\\
&\ \ \ \ +\sum_{m=0}^{\ell-1}(-x)^mq^{a\binom{m}{2}}j(q^{mb}y;q^c)+\sum_{m=0}^{k-1}(-y)^mq^{c\binom{m}{2}}j(q^{mb}x;q^a).\notag
\end{align}
\end{proposition}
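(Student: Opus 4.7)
The plan is to derive the identity by reindexing the double sum defining $\mathfrak{f}_{a,b,c}$. Substituting $r\mapsto r+\ell$, $s\mapsto s+k$, the elementary identities $\binom{r+\ell}{2}=\binom{r}{2}+r\ell+\binom{\ell}{2}$ and $(r+\ell)(s+k)=rs+rk+\ell s+\ell k$ show that the weighted summand $(-1)^{r+s}x^{r}y^{s}q^{a\binom{r}{2}+brs+c\binom{s}{2}}$ becomes $(-x)^{\ell}(-y)^{k}q^{a\binom{\ell}{2}+b\ell k+c\binom{k}{2}}(-1)^{r+s}(q^{a\ell+bk}x)^{r}(q^{b\ell+ck}y)^{s}q^{a\binom{r}{2}+brs+c\binom{s}{2}}$. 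Thus, up to the scalar prefactor, the exponent and monomial weight match the defining summand of $\mathfrak{f}_{a,b,c}(q^{a\ell+bk}x,q^{b\ell+ck}y,q)$. What does not shift cleanly is the sign weight $\sg(r,s)$, and the identity will follow from accounting for the mismatch $\sg(r,s)-\sg(r-\ell,s-k)$.

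Using $\sg(r,s)=(\sg(r)+\sg(s))/2$, this mismatch splits as $\tfrac12(\sg(r)-\sg(r-\ell))+\tfrac12(\sg(s)-\sg(s-k))$. For $\ell\ge 0$, a quick case check gives that $\tfrac12(\sg(r)-\sg(r-\ell))$ equals $1$ when $0\le r<\ell$ and vanishes otherwise; the analogous statement holds for $s$ and $k$. Consequently, the difference $\mathfrak{f}_{a,b,c}(x,y,q)-(-x)^{\ell}(-y)^{k}q^{a\binom{\ell}{2}+b\ell k+c\binom{k}{2}}\mathfrak{f}_{a,b,c}(q^{a\ell+bk}x,q^{b\ell+ck}y,q)$ reduces to two ``strip sums'': one over $r\in\{0,\dots,\ell-1\}$ with $s$ summed freely over $\mathbb{Z}$, and one over $s\in\{0,\dots,k-1\}$ with $r$ summed freely.

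Each inner free sum collapses to a Jacobi triple product: for fixed $r$, $\sum_{s\in\mathbb{Z}}(-1)^{s}(q^{br}y)^{s}q^{c\binom{s}{2}}=j(q^{br}y;q^{c})$, and similarly for the other strip. After extracting $(-x)^{r}q^{a\binom{r}{2}}$ from the first strip (and $(-y)^{s}q^{c\binom{s}{2}}$ from the second), one recovers exactly the two correction sums on the right-hand side of (\ref{equation:f-shift}), with the dummy variables relabelled as $m$. The case of negative $\ell$ or $k$ is handled by the same computation together with the convention $\sum_{m=0}^{\ell-1}:=-\sum_{m=\ell}^{-1}$: when $\ell<0$, $\tfrac12(\sg(r)-\sg(r-\ell))$ equals $-1$ on $\ell\le r<0$ and $0$ elsewhere, and the sign convention reabsorbs the minus sign back into $\sum_{m=0}^{\ell-1}$. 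The main technical obstacle is the careful case analysis for mixed signs of $\ell$ and $k$, but once the splitting of $\sg(r,s)-\sg(r-\ell,s-k)$ is in hand, the rest is routine bookkeeping.
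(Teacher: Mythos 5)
Your proof is correct and is essentially the standard argument: the paper itself quotes this proposition from \cite{HM} without proof, but your strip decomposition of $\sg(r,s)-\sg(r-\ell,s-k)$ is precisely the specialization of the abstract shift identity (\ref{equation:H2eq1.15}) to $c_{r,s}=(-1)^{r+s}x^ry^sq^{a\binom{r}{2}+brs+c\binom{s}{2}}$, which is exactly how the paper (and its triple-sum analogue in Section \ref{section:obtain-the-triple}) proceeds. Your handling of negative $\ell,k$ via the convention $\sum_{m=0}^{\ell-1}=-\sum_{m=\ell}^{-1}$ is also the intended one.
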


We have many expressions that evaluate Hecke-type double-sums in terms of Appell--Lerch functions \cite{HM}.  We recall one that we will use here.
\begin{theorem}\cite[Theorem $1.4$]{HM} \label{theorem:main-acdivb}Let $a,b,$ and $c$ be positive integers with $ac<b^2$ and $b$ divisible by $a$, $c$. Then
\begin{equation*}
\mathfrak{f}_{a,b,c}(x,y,q)=\mathfrak{h}_{a,b,c}(x,y,q,-1,-1)-\frac{1}{\overline{J}_{0,b^2/a-c}\overline{J}_{0,b^2/c-a}}\cdot \theta_{a,b,c}(x,y,q),
\end{equation*}
where 
\begin{align*}
\mathfrak{h}_{a,b,c}(x,y,q,z_1,z_0):&=j(x;q^a)m\Big( -q^{a\binom{b/a+1}{2}-c}{(-y)}{(-x)^{-b/a}},q^{b^2/a-c},z_1 \Big )\\
& \ \ \ \ \ \ +j(y;q^c)m\Big( -q^{c\binom{b/c+1}{2}-a}{(-x)}{(-y)^{-b/c}},q^{b^2/c-a},z_0 \Big ),
\end{align*}
and
\begin{align*}
&\theta_{a,b,c}(x,y,q):=\sum_{d=0}^{b/c-1}\sum_{e=0}^{b/a-1}\sum_{f=0}^{b/a-1}
q^{(b^2/a-c)\binom{d+1}{2}+(b^2/c-a)\binom{e+f+1}{2}+a\binom{f}{2}}j\big (q^{(b^2/a-c)(d+1)+bf}y;q^{b^2/a}\big ) \\
&\ \ \ \ \ \ \ \ \ \  \cdot(-x)^{f} j\big (q^{b(b^2/(ac)-1)(e+f+1)-(b^2/a-c)(d+1)+b^3(b-a)/(2a^2c)}(-x)^{b/a}y^{-1};q^{(b^2/a)(b^2/(ac)-1)}\big ) \\
& \cdot \frac{J_{b(b^2/(ac)-1)}^3j\big (q^{ (b^2/c-a)(e+1)+(b^2/a-c)(d+1)-c\binom{b/c}{2}-a\binom{b/a}{2}}(-x)^{1-b/a}(-y)^{1-b/c};q^{b(b^2/(ac)-1)}\big )}
{j\big (q^{(b^2/c-a)(e+1)-c\binom{b/c}{2}}(-x)(-y)^{-b/c},q^{(b^2/a-c)(d+1)-a\binom{b/a}{2}}(-x)^{-b/a}(-y);q^{b(b^2/(ac)-1)}\big )}.
\end{align*}
\end{theorem}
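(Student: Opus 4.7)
The plan is to prove the decomposition by the splitting-and-iteration technique implicit in the heuristic (\ref{equation:mxqz-heuristic}). The two Appell--Lerch summands comprising $\mathfrak{h}_{a,b,c}$ correspond to evaluating the inner sum in $r$ (with $s$ fixed) versus in $s$ (with $r$ fixed); the correction $\theta_{a,b,c}$ collects the finite discrepancies arising from iterating both splittings.

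First, set $\alpha:=b/a$ and $\gamma:=b/c$, both positive integers by hypothesis. Using the binomial identity $\binom{r}{2}+\alpha rs=\binom{r+\alpha s}{2}-\binom{\alpha s}{2}$, the quadratic form in the exponent rewrites as $a\binom{r}{2}+brs+c\binom{s}{2}=a\binom{r+\alpha s}{2}-(b^2/a-c)\binom{s}{2}+\tfrac{1}{2}(b-c)s$. Substituting into (\ref{equation:fabc-def-prelim}) and shifting $r\mapsto r-\alpha s$, the piece $\sum_{r,s\ge 0}$ becomes $\sum_{s\ge 0}(-y)^sq^{c\binom{s}{2}}(-x)^{-\alpha s}q^{-a\binom{\alpha s}{2}}\sum_{r\ge\alpha s}(-x)^rq^{a\binom{r}{2}}$; performing the analogous shift on the $\sum_{r,s<0}$ branch and combining, the inner partial theta equals $j(x;q^a)$ minus a finite leftover indexed by $0\le r<\alpha s$ (or the analogous range when $s<0$). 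The $j(x;q^a)$ factor extracts from a bilateral $s$-sum whose exponent has the shape $-(b^2/a-c)\binom{s}{2}+(\text{linear in }s)$; by (\ref{equation:mxqz-heuristic}) and the partial-fraction definition (\ref{equation:mdef-eq}), this $s$-sum can be rigorously identified---using the functional equations (\ref{equation:mxqz-fnq-z})--(\ref{equation:mxqz-fnq-x})---with the first summand of $\mathfrak{h}_{a,b,c}(x,y,q,-1,-1)$.

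By the symmetry $(r,a,x,\alpha)\leftrightarrow(s,c,y,\gamma)$, the same procedure carried out with $s$ as the inner variable produces the second summand of $\mathfrak{h}_{a,b,c}$. Applying the first splitting to the finite leftover from the second gives $\mathfrak{f}_{a,b,c}=\mathfrak{h}_{a,b,c}(x,y,q,-1,-1)+\text{(combined theta leftover)}$. The combined leftover is a finite triple sum indexed by $(d,e,f)$, where $d\in[0,\gamma-1]$ tracks the residue of $s$ modulo $\gamma=b/c$ arising from the second splitting, and $e,f\in[0,\alpha-1]$ track residues of the two successive $r$-shifts modulo $\alpha=b/a$; expressing each residue-class contribution via Jacobi triple product identities (\ref{j-mod-inc})--(\ref{j-mod-dec}) and elliptic transformations (\ref{equation:j-elliptic})--(\ref{equation:j-inv}) yields exactly the $\theta_{a,b,c}$ displayed. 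The denominator $\overline{J}_{0,b^2/a-c}\overline{J}_{0,b^2/c-a}$ appears because pinning $z$ to $-1$ in each Appell--Lerch function through (\ref{equation:changing-z-theorem}) introduces a factor of $j(-1;q^{\cdot})=\overline{J}_{0,\cdot}$ in the denominator of each correction term.

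The main obstacle will be the combinatorial bookkeeping in the last step. Matching the precise $q$-exponents on each of the three $\binom{\cdot}{2}$ factors inside $\theta_{a,b,c}$, as well as the arguments of the three $j$-symbols---which involve non-obvious compositions such as $q^{(b^2/a-c)(d+1)+bf}y$ and the modulus $q^{b(b^2/(ac)-1)}$---requires tracking the shifts $r\mapsto r+\alpha s$ and $s\mapsto s+\gamma r$ through both halves of the Hecke-type sum, then applying Jacobi triple product splittings at the moduli $q^{b^2/a}$ and $q^{b^2/c}$ in the correct order. Convergence in the indefinite regime $ac<b^2$ is delicate but becomes automatic once $\mathfrak{f}_{a,b,c}$ has been decomposed: each Appell--Lerch piece converges because $b^2/a-c,\,b^2/c-a>0$, and each finite leftover converges trivially.
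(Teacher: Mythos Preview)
The paper does not prove this theorem; it is quoted verbatim from \cite[Theorem~1.4]{HM} and used as a black box (to derive Corollaries~\ref{corollary:f441-expansion} and~\ref{corollary:f331-expansion}). So there is no ``paper's own proof'' to compare against, and your proposal must stand or fall on its own.

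As a proof, the proposal has a genuine gap at its core. After the shift $r\mapsto r-\alpha s$ you say the inner partial theta equals $j(x;q^a)$ minus a finite leftover, and that the resulting bilateral $s$-sum---whose exponent you correctly identify as having leading term $-(b^2/a-c)\binom{s}{2}$---``can be rigorously identified'' with the first Appell--Lerch summand via (\ref{equation:mxqz-heuristic}) and the functional equations. But $b^2/a-c>0$ by hypothesis, so that bilateral $s$-sum \emph{diverges}; and (\ref{equation:mxqz-heuristic}) is explicitly flagged in the paper as a heuristic, not an identity (``We cannot use an equal sign here, because the series on the right diverges for $|q|<1$''). The functional equations (\ref{equation:mxqz-fnq-z})--(\ref{equation:mxqz-fnq-x}) do not by themselves convert a divergent partial theta into $m(x,q,-1)$; one needs an honest analytic mechanism (in \cite{HM} this is done by first establishing the identity for $\mathfrak h_{a,b,c}(x,y,q,z_1,z_0)$ with carefully chosen $z_i$ depending on $x,y$, where the relevant sums \emph{do} converge and telescope, and only afterwards specializing $z_i\to-1$ via (\ref{equation:changing-z-theorem})). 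Your outline skips exactly this step.

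Secondarily, the sentence ``expressing each residue-class contribution via Jacobi triple product identities \ldots\ yields exactly the $\theta_{a,b,c}$ displayed'' is an assertion, not an argument: the displayed $\theta_{a,b,c}$ has a quite specific structure (a triple sum with moduli $q^{b^2/a}$, $q^{(b^2/a)(b^2/(ac)-1)}$, $q^{b(b^2/(ac)-1)}$ and a ratio of four theta functions), and nothing in your sketch explains why the leftover organizes into \emph{that} form rather than some other theta expression. Until you supply a convergent identity in place of the heuristic and actually carry out the residue-class bookkeeping, this is a plausible strategy outline but not a proof.
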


Theorem \ref{theorem:main-acdivb} has the following two specializations.
\begin{corollary}  \label{corollary:f441-expansion} We have
\begin{align}
\mathfrak{f}_{4,4,1}(x,y,q)&=\mathfrak{h}_{4,4,1}(x,y,q,-1,-1)\notag\\
&\ \ \ \ \ -\sum_{d=0}^3\frac{q^{3\binom{d+1}{2}}j(q^{3+3d}y;q^4)j(-q^{9-3d}x/y;q^{12})J_{12}^3j(-q^{9+3d}/y^3;q^{12})}{\overline{J}_{0,3}\overline{J}_{0,12}j(-q^{6}x/y^4;q^{12})j(q^{3+3d}y/x;q^{12})},\label{equation:f441}
\end{align}
where
\begin{equation}
\mathfrak{h}_{4,4,1}(x,y,q,-1,-1)=j(x;q^4)m\big ( -q^3y/x,q^3,-1 \big )+j(y;q) m\big ( q^6 x/y^{4}, q^{12}, -1 \big ).\label{equation:h441}
\end{equation}
\end{corollary}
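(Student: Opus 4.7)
The plan is to obtain Corollary~\ref{corollary:f441-expansion} as the direct specialization of Theorem~\ref{theorem:main-acdivb} to $(a,b,c) = (4,4,1)$. The hypotheses hold: $ac = 4 < 16 = b^2$, and $b = 4$ is divisible by both $a = 4$ and $c = 1$. Before substituting, I would tabulate the ancillary quantities that pervade the statement: $b/a = 1$, $b/c = 4$, $b^2/a - c = 3$, $b^2/c - a = 12$, $b^2/(ac) - 1 = 3$, $b\bigl(b^2/(ac)-1\bigr) = 12$, and $(b^2/a)\bigl(b^2/(ac)-1\bigr) = 12$. I would also record that $b - a = 0$, so the term $b^3(b-a)/(2a^2 c)$ in the second $j$-argument of $\theta_{a,b,c}$ vanishes, and that $\binom{b/a}{2} = \binom{1}{2} = 0$.

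For the $\mathfrak{h}$-piece, I would substitute into $\mathfrak{h}_{a,b,c}(x,y,q,-1,-1)$: in the first summand one has $a\binom{b/a+1}{2} - c = 4\binom{2}{2} - 1 = 3$ and $(-y)(-x)^{-1} = y/x$, yielding $j(x;q^4)\,m(-q^3 y/x,\,q^3,\,-1)$; in the second, $c\binom{b/c+1}{2} - a = \binom{5}{2} - 4 = 6$ and $(-x)(-y)^{-4} = -x/y^4$, yielding $j(y;q)\,m(q^6 x/y^4,\,q^{12},\,-1)$. Together these produce \eqref{equation:h441}.

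For the $\theta$-piece, the key simplification is that $b/a - 1 = 0$, so both the $e$- and $f$-sums collapse to the single term $e = f = 0$, leaving only the sum over $d \in \{0,1,2,3\}$. In that sum the $q$-exponent becomes $3\binom{d+1}{2}$ (since $\binom{1}{2}$ and $\binom{0}{2}$ both vanish), $(-x)^f = 1$, and the three $j$-arguments in the numerator simplify to $j(q^{3+3d}y;q^4)$, $j(-q^{9-3d}x/y;q^{12})$, and $j(-q^{9+3d}/y^3;q^{12})$, while the two arguments in the denominator simplify to $-q^6 x/y^4$ and $q^{3+3d}y/x$ (using $-c\binom{b/c}{2} = -6$ and $-a\binom{b/a}{2} = 0$ in the respective exponents). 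Combined with the prefactor $-1/(\overline{J}_{0,3}\overline{J}_{0,12})$ from the theorem, this reproduces \eqref{equation:f441}.

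The main obstacle is nothing conceptual, only careful bookkeeping—in particular, tracking the signs produced by $(-x)^{b/a}$, $(-y)^{-b/c}$, and $(-y)^{1-b/c}$ when $b/c = 4$ is even, and cleanly folding the constants $-c\binom{b/c}{2}$ and $-a\binom{b/a}{2}$ into the $q$-exponents in the denominator. Once the ancillary quantities above are recorded, the corollary reduces to a line-by-line substitution into Theorem~\ref{theorem:main-acdivb}.
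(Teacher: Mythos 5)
Your proposal is correct and is exactly how the paper obtains this result: the corollary is stated as a direct specialization of Theorem~\ref{theorem:main-acdivb} to $(a,b,c)=(4,4,1)$, with no further argument given. Your bookkeeping of the ancillary quantities (in particular the collapse of the $e$- and $f$-sums since $b/a-1=0$, and the sign from $(-y)^{1-b/c}=-y^{-3}$) checks out against \eqref{equation:f441} and \eqref{equation:h441}.
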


\begin{corollary}  \label{corollary:f331-expansion}  We have
{\allowdisplaybreaks \begin{align}
\mathfrak{f}_{3,3,1}(x,y,q)&=\mathfrak{h}_{3,3,1}(x,y,q,-1,-1)\label{equation:f331-id}\\
&\ \ \ \ -\sum_{d=0}^{2}
\frac{q^{d(d+1)}j\big (q^{2+2d}y;q^{3}\big )  j\big (-q^{4-2d}x/y;q^{6}\big ) J_{6}^3j\big (q^{5+2d}/y^{2};q^{6}\big )}
{4\overline{J}_{2,8}\overline{J}_{6,24}j\big (q^{3}x/y^{3};q^6\big )j\big (q^{2+2d}y/x;q^6\big )},\notag
\end{align}}%
where
\begin{align}
\mathfrak{h}_{3,3,1}(x,y,q,-1,-1)=j(x;q^3)m(-q^2x^{-1}y,q^2,-1)+j(y;q)m(-q^3xy^{-3},q^6,-1).\label{equation:g331-id}
\end{align}
\end{corollary}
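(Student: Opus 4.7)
The plan is to derive Corollary \ref{corollary:f331-expansion} as a direct specialization of Theorem \ref{theorem:main-acdivb} with parameter choice $(a,b,c) = (3,3,1)$, followed by straightforward bookkeeping.

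First I would verify the hypotheses of Theorem \ref{theorem:main-acdivb}: one has $ac = 3 < 9 = b^2$, and since $b/a = 1$ and $b/c = 3$, the integer $b$ is divisible by both $a$ and $c$. So the theorem applies.

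Next I would compute $\mathfrak{h}_{3,3,1}(x,y,q,-1,-1)$ by substitution. In the first summand, $\binom{b/a+1}{2} = \binom{2}{2} = 1$, $b^2/a - c = 2$, and $(-y)(-x)^{-b/a} = y/x$, producing $j(x;q^3) m(-q^2 x^{-1} y, q^2, -1)$. In the second summand, $\binom{b/c+1}{2} = \binom{4}{2} = 6$, $b^2/c - a = 6$, and $(-x)(-y)^{-b/c} = x y^{-3}$, producing $j(y;q) m(-q^3 x y^{-3}, q^6, -1)$. These combine to the stated $\mathfrak{h}_{3,3,1}$.

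For the theta correction $\theta_{3,3,1}(x,y,q)$, the key simplification is that $b/a = 1$ collapses both the $e$-sum and the $f$-sum to the single term $e = f = 0$, leaving just $\sum_{d=0}^{2}$. With $e=f=0$ the exponent of $q$ in the prefactor reduces to $2\binom{d+1}{2} + 6\binom{1}{2} = d(d+1)$, and since $b-a=0$, the awkward term $b^3(b-a)/(2a^2c)$ vanishes. Substituting systematically into each theta factor gives $j(q^{2+2d} y; q^3)$, $j(-q^{4-2d} x/y; q^6)$, $J_6^3$, $j(q^{5+2d}/y^2; q^6)$, and denominator $j(q^3 x/y^3, q^{2+2d} y/x; q^6)$, matching the claimed summand.

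The only mildly nonobvious step is the prefactor: Theorem \ref{theorem:main-acdivb} produces $1/(\overline{J}_{0,2}\overline{J}_{0,6})$, whereas the corollary is stated with $1/(4\overline{J}_{2,8}\overline{J}_{6,24})$. I would reconcile these by applying the product rearrangement $\overline{J}_{0,1} = 2\overline{J}_{1,4}$ (listed at the start of Section \ref{section:prelim}) under the dilations $q \mapsto q^2$ and $q \mapsto q^6$ respectively, which yield $\overline{J}_{0,2} = 2\overline{J}_{2,8}$ and $\overline{J}_{0,6} = 2\overline{J}_{6,24}$, hence $\overline{J}_{0,2}\overline{J}_{0,6} = 4\overline{J}_{2,8}\overline{J}_{6,24}$. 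There is no real obstacle here; the argument is essentially a specialization plus arithmetic, with the small bookkeeping trap being the index translation in the prefactor rearrangement.
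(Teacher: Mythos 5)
Your proposal is correct and matches the paper's (implicit) derivation exactly: the paper presents Corollary \ref{corollary:f331-expansion} as a direct specialization of Theorem \ref{theorem:main-acdivb} at $(a,b,c)=(3,3,1)$, with the $e$- and $f$-sums collapsing since $b/a=1$ and the prefactor $\overline{J}_{0,2}\overline{J}_{0,6}=4\overline{J}_{2,8}\overline{J}_{6,24}$ accounting for the stated denominator. All of your substitutions check out.
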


\begin{proposition}\label{proposition:f111-eval} \label{proposition:f111-zero} For $m,n\in \mathbb{Z}$, $m\ne n$, we have
\begin{equation}
\mathfrak{f}_{1,1,1}(q^{m},q^{n},q)=0.
\end{equation}
\end{proposition}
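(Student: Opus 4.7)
The plan is to evaluate the double sum directly by exploiting the identity $\binom{r}{2}+rs+\binom{s}{2}=\binom{r+s}{2}$, which collapses the two-dimensional sum to a one-dimensional one, after which the result should vanish because each coefficient is a theta function evaluated at an integer power of $q$.

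Specifically, I would first set $x=q^m$, $y=q^n$, and write $mr+ns=n(r+s)+(m-n)r$, so that with $k=r+s$ and $\beta:=m-n$ the summand becomes $(-1)^k q^{nk+\binom{k}{2}+\beta r}$. Grouping terms of fixed $k$ and using the convention for the ``negative'' cone ($r,s<0$ forces $r\in\{k+1,\dots,-1\}$), one is left with an inner geometric sum
\begin{equation*}
\sigma_k=\begin{cases}\sum_{r=0}^{k}q^{\beta r},& k\ge 0,\\ -\sum_{r=k+1}^{-1}q^{\beta r},& k\le -2,\end{cases}
\end{equation*}
and $\sigma_{-1}=0$. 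The key (easily verified) point is that, since $\beta\ne 0$, both cases admit the uniform closed form $\sigma_k=(1-q^{\beta(k+1)})/(1-q^{\beta})$, and this expression is also correctly $0$ at $k=-1$.

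With this unification, the full sum over $k\in\mathbb{Z}$ splits as
\begin{equation*}
\mathfrak{f}_{1,1,1}(q^m,q^n,q)=\frac{1}{1-q^{\beta}}\Bigl[\sum_{k}(-1)^k q^{\binom{k}{2}+nk}-q^{\beta}\sum_{k}(-1)^k q^{\binom{k}{2}+(n+\beta)k}\Bigr]=\frac{j(q^n;q)-q^{\beta}j(q^m;q)}{1-q^{\beta}},
\end{equation*}
by Jacobi's triple product. The elliptic transformation \eqref{equation:j-elliptic} applied at $x=1$ gives $j(q^n;q)=(-1)^n q^{-\binom{n}{2}}j(1;q)=0$ for every integer $n$, and similarly $j(q^m;q)=0$; since $\beta\ne 0$ and $|q|<1$ make $1-q^{\beta}$ nonzero, the quotient vanishes.

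The only mild obstacle is bookkeeping: verifying that the closed-form expression for $\sigma_k$ really does cover the $k\le -2$ branch with the correct sign (which comes from the overall minus sign in the definition of $\mathfrak{f}_{a,b,c}$), and that it degenerates correctly at $k=-1$ to make the ``gap'' invisible. Once that is confirmed, the rest is essentially the observation that $j(q^{\text{integer}};q)=0$.
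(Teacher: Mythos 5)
Your proof is correct, but it takes a genuinely different route from the paper's. The paper applies the general shift functional equation \eqref{equation:f-shift} with $(\ell,k)=(1,-1)$: the theta correction terms vanish because $j(q^m;q)=j(q^n;q)=0$, the shift returns the same arguments $(q^m,q^n)$, and the sum acquires a factor $q^{m-n}\ne 1$, forcing it to be zero. You instead exploit the degeneracy $\binom{r}{2}+rs+\binom{s}{2}=\binom{r+s}{2}$ to collapse the double sum along diagonals $k=r+s$, sum the inner geometric series in closed form (correctly checking that the $k\le -2$ branch and the empty $k=-1$ diagonal are captured by the same expression $(1-q^{\beta(k+1)})/(1-q^{\beta})$), and land on a combination of two theta functions $j(q^n;q)$ and $j(q^m;q)$, each of which vanishes by \eqref{equation:j-elliptic}. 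Both arguments are complete; the rearrangements you perform are justified by absolute convergence, since the quadratic exponent dominates and each diagonal contributes only finitely many terms. The paper's argument is shorter given the machinery of Proposition \ref{proposition:f-functionaleqn} and illustrates a trick (a shift fixing the arguments but scaling the sum) that works whenever the correction thetas vanish; yours is self-contained and elementary, actually exhibits $\mathfrak{f}_{1,1,1}(q^m,q^n,q)$ as $\bigl(j(q^n;q)-q^{m-n}j(q^m;q)\bigr)/(1-q^{m-n})$ before concluding it is zero, and makes transparent that the vanishing stems from the degenerate discriminant $b^2=ac$ reducing the double sum to a single theta-like sum; on the other hand it is tied to the case $a=b=c$ and does not generalize the way the shift argument does.
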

\begin{proof}[Proof of Proposition \ref{proposition:f111-eval}]
Using the functional equation (\ref{equation:f-shift}), we note that the two theta functions have $j(q^m;q)=j(q^n;q)=0$, giving us
\begin{equation*}
\mathfrak{f}_{1,1,1}(q^{m},q^{n},q)=(-1)^{\ell+k}q^{m\ell}q^{nk}q^{\binom{\ell}{2}+\ell k +\binom{k}{2}}
\mathfrak{f}_{1,1,1}(q^{\ell+k+m},q^{\ell+k+n},q).
\end{equation*}
If we choose $\ell=-k$ then
 \begin{equation*}
\mathfrak{f}_{1,1,1}(q^{m},q^{n},q)=q^{m\ell-n\ell}\mathfrak{f}_{1,1,1}(q^{m},q^{n},q).
\end{equation*}
If we set $\ell=1$ then
 \begin{equation*}
\mathfrak{f}_{1,1,1}(q^{m},q^{n},q)=q^{m-n}\mathfrak{f}_{1,1,1}(q^{m},q^{n},q).
\end{equation*}
Because $m\ne n$ and $|q|<1$, we conclude
 \begin{equation*}
\mathfrak{f}_{1,1,1}(q^{m},q^{n},q)=0.\qedhere
\end{equation*}
\end{proof}

\begin{proposition}\label{proposition:symmetry-shift} For $t\in\mathbb{Z}$, we have
\begin{align}
 \mathfrak{f}_{1,7,1}&(q^{2+t},q^{3+t},q)+q^{4+t}\mathfrak{f}_{1,7,1}(q^{6+t},q^{7+t},q)\label{equation:f171-f441}\\
&=\mathfrak{f}_{4,4,1}(-q^{4+t},q^{2+t},q)-q^{-t}\mathfrak{f}_{4,4,1}(-q^{4-t},q^{2-t},q).\notag\\
\mathfrak{f}_{1,5,1}&(q^{2+t},q^{2+t},q)+q^{3+t}\mathfrak{f}_{1,5,1}(q^{5+t},q^{5+t},q)\label{equation:f151-f331}\\
&=\mathfrak{f}_{3,3,1}(-q^{3+t},q^{2+t},q)-q^{-t}\mathfrak{f}_{3,3,1}(-q^{3-t},q^{2-t},q).\notag
\end{align}

\end{proposition}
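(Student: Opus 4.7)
The plan is to reduce each identity to a functional equation under $t\mapsto -t$ via Proposition~\ref{proposition:f-flip}, and then verify this equation by an explicit change of variables on the double sums.

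For (\ref{equation:f171-f441}), applying Proposition~\ref{proposition:f-flip} with $(a,b,c)=(1,7,1)$ and $(x,y)=(q^{6+t},q^{7+t})$, combined with the symmetry $\mathfrak{f}_{1,7,1}(x,y,q)=\mathfrak{f}_{1,7,1}(y,x,q)$ that follows from $a=c$, yields
\[
q^{4+t}\mathfrak{f}_{1,7,1}(q^{6+t},q^{7+t},q)=-q^{-t}\mathfrak{f}_{1,7,1}(q^{2-t},q^{3-t},q).
\]
Hence the left-hand side of (\ref{equation:f171-f441}) takes the shape $F(t)-q^{-t}F(-t)$, matching the right-hand side, and the identity becomes $\mathcal{D}(t)=q^{-t}\mathcal{D}(-t)$ where
\[
\mathcal{D}(t):=\mathfrak{f}_{1,7,1}(q^{2+t},q^{3+t},q)-\mathfrak{f}_{4,4,1}(-q^{4+t},q^{2+t},q).
\]
Expanding $\mathcal{D}(t)$ as a double sum and grouping by $w:=r+s$, the $t$-dependence in each summand is only through an overall factor $q^{tw}$, since $(2+t)r+(3+t)s=2r+3s+tw$ and $(4+t)r+(2+t)s=4r+2s+tw$. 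Writing $\mathcal{D}(t)=\sum_w\gamma_w q^{tw}$ with $\gamma_w$ independent of $t$, and using the antisymmetry $T(-w-1)=-T(w)$ for $T(w):=q^{tw}-q^{-t(w+1)}$, the target functional equation reduces to the combinatorial claim $\gamma_w=\gamma_{-w-1}$ for every $w\ge 1$; the boundary case $w=0$ follows from $\alpha_0=\beta_0=1$, which is immediate.

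The main obstacle is establishing $\gamma_w=\gamma_{-w-1}$. The essential ingredient is the base change $(R,S)=(2s,\,r-s)$, which converts the $(1,7,1)$ quadratic form into the $(4,4,1)$ form with a linear correction term:
\[
\binom{r}{2}+7rs+\binom{s}{2}=4\binom{2s}{2}+4(2s)(r-s)+\binom{r-s}{2}+3s.
\]
Under this substitution, $\mathfrak{f}_{1,7,1}(q^{2+t},q^{3+t},q)$ becomes a sum whose summands match exponent-by-exponent those of the even-$R$ restriction of $\mathfrak{f}_{4,4,1}(-q^{4+t},q^{2+t},q)$, but supported on a different signed region; the discrepancy between the two sign regions is a pair of off-diagonal boundary wedges which should absorb the odd-$R$ restriction of $\mathfrak{f}_{4,4,1}(-q^{4+t},q^{2+t},q)$ once the $t\mapsto -t$ counterpart is combined in. The second identity (\ref{equation:f151-f331}) follows in parallel, via the analogous computation
\[
\binom{r}{2}+5rs+\binom{s}{2}=3\binom{2s}{2}+3(2s)(r-s)+\binom{r-s}{2}+2s.
\]
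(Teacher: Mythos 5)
Your reduction is sound as far as it goes: the application of Proposition~\ref{proposition:f-flip} (with the $r\leftrightarrow s$ symmetry coming from $a=c$) correctly turns the left-hand side of (\ref{equation:f171-f441}) into $F(t)-q^{-t}F(-t)$, so the identity is equivalent to $\mathcal{D}(t)=q^{-t}\mathcal{D}(-t)$, and since each antidiagonal $r+s=w$ meets the signed region in finitely many points, the decomposition $\mathcal{D}(t)=\sum_w\gamma_w q^{tw}$ and the reduction to $\gamma_w=\gamma_{-w-1}$ are legitimate (and that claim is in fact true; e.g.\ $\gamma_1=\gamma_{-2}=-q^3-q^4$). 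Your quadratic-form identity $\binom{r}{2}+7rs+\binom{s}{2}=4\binom{2s}{2}+4(2s)(r-s)+\binom{r-s}{2}+3s$ is also correct, and the map $(r,s)\mapsto(2s,r-s)$ is precisely the composite of the substitutions the paper uses ($(u,v)=(2r,2s)$, then $(n,j)=(\frac{u+v}{2},\frac{u-v}{2})$, then $(U,V)=(n-j,j)$).

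The gap is that the one step carrying all the content is asserted rather than proved. The sentence ``the discrepancy between the two sign regions is a pair of off-diagonal boundary wedges which \emph{should} absorb the odd-$R$ restriction \ldots once the $t\mapsto-t$ counterpart is combined in'' is exactly the claim $\gamma_w=\gamma_{-w-1}$, restated geometrically, not a derivation of it. Under $(R,S)=(2s,r-s)$ the region $\{r,s\ge0\}\cup\{r,s<0\}$ maps to $\{R\ge0,\,R+2S\ge0\}\cup\{R<0,\,R+2S<0\}$ on the even-$R$ sublattice, while the target $\mathfrak{f}_{4,4,1}$ region is $\{R,S\ge0\}\cup\{R,S<0\}$ on the full lattice; you must exhibit an explicit bijection (or cancellation) between the wedge discrepancies, the odd-$R$ sublattice, and the reflected terms. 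There is also a structural confusion here: once you have reduced to proving $\gamma_w=\gamma_{-w-1}$, there is no ``$t\mapsto-t$ counterpart'' left to combine in --- $\gamma_w$ is a coefficient of the single object $F(t)-G(t)$ --- so the mechanism you describe does not match the statement you reduced to. The paper resolves exactly this bookkeeping by keeping both $\mathfrak{f}_{1,7,1}$ terms together as one sum over the checkerboard lattice $u\equiv v\pmod 2$, passing to $(n,j)$, splitting the double cone into $\sum_{n\ge0}\sum_{j=-n}^{n}$ minus $\sum_{n<0}\sum_{n<j<-n}$, and reflecting $n\mapsto-n-1$; that reflection is what produces the factor $(1-q^{-2tn-t})$ and hence the right-hand side. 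Until you carry out the analogous region accounting, the proof is incomplete (and likewise for (\ref{equation:f151-f331})).
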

\begin{proof}[Proof of Proposition \ref{proposition:symmetry-shift}]
The proofs for the identities are similar, so we only do the first.  A change of variables yields
{\allowdisplaybreaks \begin{align*}
\mathfrak{f}_{1,7,1}&(q^{2+t},q^{3+t},q)+q^{4+t}\mathfrak{f}_{1,7,1}(q^{6+t},q^{7+t},q)\\
&=\sum_{\substack{u,v\\u\equiv v \mod 2}}\sg(u,v)(-1)^{\frac{u-v}{2}}q^{\frac{1}{8}u^2+\frac{7}{4}uv+\frac{1}{8}v^2+\frac{3}{4}u+\frac{5}{4}v+\frac{t}{2}(u+v)}\\
&=\Big ( \sum_{\substack{n+j\ge0\\n-j\ge 0}}- \sum_{\substack{n+j < 0\\n-j < 0}}\Big )(-1)^jq^{2n^2+(2+t)n-j(3j+1)/2}\\
&= \sum_{n\ge0}q^{2n^2+(2+t)n}\sum_{j=-n}^{n}(-1)^jq^{-j(3j+1)/2}
 -\sum_{n<0}q^{2n^2+(2+t)n}\sum_{n<j<-n}(-1)^jq^{-j(3j+1)/2}\\
&= \sum_{n\ge0}q^{2n^2+(2+t)n}\sum_{j=-n}^{n}(-1)^jq^{-j(3j+1)/2}\\
&\ \ \ \ \ \ -\sum_{n\ge0}q^{2(-n-1)^2+(2+t)(-n-1)}\sum_{-n-1<j<n+1}(-1)^jq^{-j(3j+1)/2}\\
&= \sum_{n\ge0}q^{2n^2+(2+t)n}\sum_{j=-n}^{n}(-1)^jq^{-j(3j+1)/2}
 -\sum_{n\ge0}q^{2n^2+(2-t)n-t}\sum_{j=-n}^{n}(-1)^jq^{-j(3j+1)/2}\\
&= \sum_{\sg(j)=\sg(n-j)}\sg(j)(-1)^{j}q^{2n^2+(2+t)n-j(3j+1)/2}(1-q^{-2tn-t})\\
&=\mathfrak{f}_{4,4,1}(-q^{4+t},q^{2+t},q)-q^{-t}\mathfrak{f}_{4,4,1}(-q^{4-t},q^{2-t},q),
\end{align*}}%
where the last line follows from the substitution $u=j$ and $v=n-j$.\qedhere
\end{proof}

\begin{proposition}\label{proposition:f441-reduce}  For $m\in\mathbb{Z}$ and $k\in\{0,1,2,3\}$, we have
\begin{align}
\mathfrak{f}_{4,4,1}&(-q^{4+4m+k},q^{2+4m+k},q)-q^{-4m-k}\mathfrak{f}_{4,4,1}(-q^{4-4m-k},q^{2-4m-k},q)
\label{equation:f441-reduce}\\
&=\big ( \mathfrak{f}_{4,4,1}(-q^{4+k},q^{2+k},q)-q^{-k}\mathfrak{f}_{4,4,1}(-q^{4-k},q^{2-k},q) \big )q^{-2m^2-2m-mk}\notag
\end{align}
\end{proposition}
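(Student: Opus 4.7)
The plan is to apply the functional equation of Proposition \ref{proposition:f-functionaleqn} to each of the two $\mathfrak{f}_{4,4,1}$ terms on the left-hand side, choosing shift parameters that send their arguments to those appearing on the right-hand side. Concretely, for $\mathfrak{f}_{4,4,1}(-q^{4+4m+k},q^{2+4m+k},q)$ I would invoke (\ref{equation:f-shift}) with $(a,b,c)=(4,4,1)$ and shift parameters $\ell=-m$ and (renaming the second shift parameter $k'$ to avoid clash with the statement's $k$) $k'=0$; this sends the arguments to exactly $(-q^{4+k},q^{2+k})$, since
\begin{equation*}
q^{4\ell+4k'}(-q^{4+4m+k})=-q^{4+k},\qquad q^{4\ell+k'}\cdot q^{2+4m+k}=q^{2+k}.
\end{equation*}
For $\mathfrak{f}_{4,4,1}(-q^{4-4m-k},q^{2-4m-k},q)$ the symmetric choice $\ell=m$, $k'=0$ carries the arguments to $(-q^{4-k},q^{2-k})$.

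Next I would compute the prefactor $(-x)^\ell(-y)^{k'} q^{4\binom{\ell}{2}+4\ell k'+\binom{k'}{2}}$ in each case. For the first term this is $(q^{4+4m+k})^{-m}\cdot q^{4\binom{-m}{2}}=q^{-2m^2-2m-mk}$, which is precisely the coefficient on the right-hand side of (\ref{equation:f441-reduce}). For the second term it equals $(q^{4-4m-k})^{m}\cdot q^{4\binom{m}{2}}=q^{-2m^2+2m-mk}$; combined with the overall factor $q^{-4m-k}$ present in the statement this becomes $q^{-2m^2-2m-mk-k}$, which matches $q^{-k}\cdot q^{-2m^2-2m-mk}$ on the right-hand side.

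The remaining task is to verify that all theta corrections produced by (\ref{equation:f-shift}) vanish. Since $k'=0$ in both applications, only the first sum $\sum_{i=0}^{\ell-1}(-x)^i q^{4\binom{i}{2}} j(q^{4i}y;q)$ contributes, and in both cases $y$ is an integer power of $q$, so every theta factor takes the form $j(q^n;q)$ with $n\in\mathbb{Z}$. Specializing (\ref{equation:j-elliptic}) at $x=1$ gives $j(q^n;q)=(-1)^n q^{-\binom{n}{2}}j(1;q)=0$, so every summand in the correction vanishes identically. The only point requiring care is the sum-bound bookkeeping when $\ell=-m<0$, where one must invoke the convention $\sum_{r=a}^{b}=-\sum_{r=b+1}^{a-1}$ for $b<a$; but since each summand is zero term by term, this is harmless. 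Once these corrections are dispensed with, the desired identity follows by matching the two prefactor computations above, and the main (minor) obstacle is simply the sign/exponent accounting in the prefactor calculation.
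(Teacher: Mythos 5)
Your proof is correct, and it takes a genuinely different --- and more elementary --- route than the paper. The paper proves Proposition \ref{proposition:f441-reduce} by substituting the full expansion of Corollary \ref{corollary:f441-expansion}: it first checks that the Appell--Lerch pieces $\mathfrak{h}_{4,4,1}$ cancel between the two terms of the stated combination (each piece is nonzero on its own), and then pushes the substitution $m\rightarrow 4m+k$ through the residual quotient of theta functions using (\ref{equation:j-elliptic}), a fairly lengthy bookkeeping computation. You instead apply the shift functional equation (\ref{equation:f-shift}) with $(\ell,k')=(-m,0)$ and $(m,0)$ directly to the two $\mathfrak{f}_{4,4,1}$ terms; since $c=1$, every theta correction has the form $j(q^{n};q)=0$ with $n\in\mathbb{Z}$, so each term individually satisfies
\begin{equation*}
\mathfrak{f}_{4,4,1}(-q^{4\pm(4m+k)},q^{2\pm(4m+k)},q)=q^{-2m^2\mp 2m-mk}\,\mathfrak{f}_{4,4,1}(-q^{4\pm k},q^{2\pm k},q),
\end{equation*}
and the proposition follows at once from the prefactor arithmetic, which I have checked and which is right. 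Your argument is shorter, avoids Theorem \ref{theorem:main-acdivb} and Corollary \ref{corollary:f441-expansion} entirely, and in fact proves a slightly stronger statement, since the two terms transform separately and the particular linear combination on the left-hand side plays no role for you (whereas it is essential in the paper's proof, where the Appell--Lerch contributions only vanish in the difference). Your remark about the summation convention for $\ell<0$ being harmless because each correction term vanishes identically is also correct.
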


\begin{proposition}\label{proposition:f331-reduce} For $m\in\mathbb{Z}$ and $k\in\{0,1,2\}$, we have
{\allowdisplaybreaks \begin{align}
\mathfrak{f}_{3,3,1}&(-q^{3+3m+k},q^{2+3m+k},q)-q^{-3m-k}\mathfrak{f}_{3,3,1}(-q^{3-3m-k},q^{2-3m-k},q)
\label{equation:f331-reduce}\\
&=\Big ( \mathfrak{f}_{3,3,1}(-q^{3+k},q^{2+k},q)
-q^{-k}\mathfrak{f}_{3,3,1}(-q^{3-k},q^{2-k},q)\Big ) q^{-\frac{3}{2}m^2-\frac{3}{2}m-mk}.\notag
\end{align}}%
\end{proposition}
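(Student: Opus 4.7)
The plan is to obtain both pieces on the left-hand side as shifted versions of the right-hand side's Hecke sums by two applications of the functional equation in Proposition \ref{proposition:f-functionaleqn}, and to observe that all of the theta residues produced by the functional equation vanish identically because they reduce to $j(q^n;q)$ with $n\in\mathbb{Z}$.

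Concretely, for the first term I would apply (\ref{equation:f-shift}) with $(a,b,c)=(3,3,1)$, $(x,y)=(-q^{3+k},q^{2+k})$, and shift parameters $(\ell,\kappa)=(m,0)$ (I rename the functional-equation index to $\kappa$ to avoid clashing with the $k$ of the proposition). The shift lands on $(q^{3m}x,q^{3m}y)=(-q^{3+3m+k},q^{2+3m+k})$, the multiplicative factor works out to $q^{(3+k)m+3\binom{m}{2}}=q^{3m^{2}/2+3m/2+mk}$, and the theta correction is
\[
\sum_{j=0}^{m-1}q^{(3+k)j+3\binom{j}{2}}\,j(q^{3j+2+k};q),
\]
which vanishes term-by-term since $j(q^{n};q)=0$ for every $n\in\mathbb{Z}$ and $3j+2+k\in\mathbb{Z}$ for all $k\in\{0,1,2\}$. (The convention for sums over empty or reversed index ranges means the same vanishing happens when $m\le 0$.) Solving for the shifted sum gives
\[
\mathfrak{f}_{3,3,1}(-q^{3+3m+k},q^{2+3m+k},q)=q^{-3m^{2}/2-3m/2-mk}\,\mathfrak{f}_{3,3,1}(-q^{3+k},q^{2+k},q).
\]

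For the second term I would apply the same functional equation with $(x,y)=(-q^{3-k},q^{2-k})$ and $(\ell,\kappa)=(-m,0)$, so the shift lands on $(-q^{3-3m-k},q^{2-3m-k})$. The exponent bookkeeping (using $3\binom{-m}{2}=3m(m+1)/2$) produces
\[
\mathfrak{f}_{3,3,1}(-q^{3-3m-k},q^{2-3m-k},q)=q^{(3-k)m-3m(m+1)/2}\,\mathfrak{f}_{3,3,1}(-q^{3-k},q^{2-k},q),
\]
and the theta residue is again a sum of terms of the form $j(q^{3j+2-k};q)=0$. Multiplying through by $q^{-3m-k}$ puts this exactly in the form
\[
q^{-k}\,q^{-3m^{2}/2-3m/2-mk}\,\mathfrak{f}_{3,3,1}(-q^{3-k},q^{2-k},q),
\]
after which subtracting the two expressions and factoring out $q^{-3m^{2}/2-3m/2-mk}$ yields the stated identity.

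The only real point of care is verifying that the theta residues truly vanish and that the arithmetic of the exponents matches the target $-\tfrac{3}{2}m^{2}-\tfrac{3}{2}m-mk$; there is no genuine obstacle, since the vanishing of $j(q^{n};q)$ for integral $n$ is exactly the ingredient that already drove Proposition \ref{proposition:f111-eval}. The analogous Proposition \ref{proposition:f441-reduce} is proved identically, using $(a,b,c)=(4,4,1)$ and $k\in\{0,1,2,3\}$, where the residues reduce to $j(q^{4j+2+k};q)=0$.
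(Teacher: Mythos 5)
Your proof is correct, but it takes a genuinely different route from the paper's. You apply the shift functional equation (\ref{equation:f-shift}) directly to each of the two $\mathfrak{f}_{3,3,1}$ terms, with shift parameters $(m,0)$ and $(-m,0)$ respectively, and observe that every theta correction produced is of the form $j(q^{3j+2\pm k};q)=0$; the exponent bookkeeping (the factor $q^{3m^2/2+3m/2+mk}$ for the first term and, after multiplying through by $q^{-3m-k}$, the matching factor $q^{-k}q^{-3m^2/2-3m/2-mk}$ for the second) checks out, and the sum convention handles $m\le 0$ as you note. The paper instead invokes the full Appell--Lerch expansion of Corollary \ref{corollary:f331-expansion}: it first shows that the $\mathfrak{h}_{3,3,1}$ contributions cancel in the given combination, and then applies the elliptic transformation (\ref{equation:j-elliptic}) to each theta function in the surviving quotient to extract the factor $q^{-\frac{3}{2}m^2-\frac{3}{2}m-mk}$. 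Your argument is shorter and more elementary, needing nothing beyond Proposition \ref{proposition:f-functionaleqn} and the vanishing of $j(q^n;q)$ for integral $n$ (the same mechanism that drives Proposition \ref{proposition:f111-eval}); it also proves the strictly stronger fact that each $\mathfrak{f}_{3,3,1}$ term individually satisfies the reduction, so the particular difference appearing on the left-hand side plays no role. What the paper's heavier route buys is the intermediate identity for the combination as an explicit sum of theta quotients with the $m(x,q,z)$ parts already cancelled, which is reused verbatim in the proof of Lemma \ref{lemma:f441-f331-evals} to evaluate $b_1$; with your approach that step would have to be derived separately. Your closing remark that Proposition \ref{proposition:f441-reduce} follows by the identical argument with $(a,b,c)=(4,4,1)$ is also correct.
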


\begin{lemma}\label{lemma:f441-f331-evals}  If we define
\begin{align}
a_k&:=\mathfrak{f}_{4,4,1}(-q^{4+k},q^{2+k},q)
-q^{-k}\mathfrak{f}_{4,4,1}(-q^{4-k},q^{2-k},q),\\
b_k&:=\mathfrak{f}_{3,3,1}(-q^{3+k},q^{2+k},q)
-q^{-k}\mathfrak{f}_{3,3,1}(-q^{3-k},q^{2-k},q),
\end{align}
then
\begin{equation}
a_0=0, \ a_1 = -q^{-1}J_1^2, \ a_2=0, a_3=q^{-3}J_1^2,\label{equation:f441-evaluations}
\end{equation}
and
\begin{equation}
b_0=0, \ b_1=-q^{-1}J_1^3/J_2, \ b_2=q^{-2}J_1^3/J_2.\label{equation:f331-evaluations}
\end{equation}
\end{lemma}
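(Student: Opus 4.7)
The cases $a_0=0$ and $b_0=0$ are immediate, each being a difference of identical terms. For the remaining cases, the plan is to apply the Appell--Lerch expansions of Corollaries \ref{corollary:f441-expansion} and \ref{corollary:f331-expansion}, and to show that the Appell--Lerch pieces cancel in the differences defining $a_{k}$ and $b_{k}$, reducing each evaluation to a theta-function identity.

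Two observations drive the cancellation. First, for every integer $k$ one has $j(q^{2\pm k};q)=0$, because a factor in the triple-product becomes $(1;q)_{\infty}=0$; this kills the second $m$-function term in each $\mathfrak{h}$. Second, iterating the elliptic law (\ref{equation:j-elliptic}) and the inversion law (\ref{equation:j-inv}) one finds $j(-q^{4-k};q^{4})=j(-q^{k};q^{4})=q^{k}j(-q^{4+k};q^{4})$, together with the analogous identity $j(-q^{3-k};q^{3})=q^{k}j(-q^{3+k};q^{3})$. Since the first Appell--Lerch slot in each $\mathfrak{h}$ evaluates to the same constant value for both $\pm k$ (namely $m(q,q^{3},-1)$ for $\mathfrak{h}_{4,4,1}$ and $m(q,q^{2},-1)$ for $\mathfrak{h}_{3,3,1}$), the Appell--Lerch contributions match exactly, so
\[ a_{k}=-\theta_{4,4,1}(-q^{4+k},q^{2+k},q)+q^{-k}\theta_{4,4,1}(-q^{4-k},q^{2-k},q), \]
and analogously for $b_{k}$, where $\theta_{a,b,c}$ denotes the theta--function correction in the relevant Corollary.

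What remains is to evaluate these theta-function differences for $k\in\{1,2,3\}$ (resp.\ $k\in\{1,2\}$). Substituting the specified arguments into the explicit sums $\sum_{d=0}^{3}$ of Corollary \ref{corollary:f441-expansion} (resp.\ $\sum_{d=0}^{2}$ of Corollary \ref{corollary:f331-expansion}) kills most summands via $j(q^{m};q^{4})=0$ when $4\mid m$ (resp.\ $j(q^{m};q^{3})=0$ when $3\mid m$), leaving a handful of surviving terms. I would simplify these using the elliptic/inversion laws, the modulus-change identities (\ref{j-mod-inc})--(\ref{j-mod-dec}), the addition formula (\ref{equation:H1Thm1.2B}), and the product rearrangements from Section \ref{section:prelim}, aiming to collapse each difference to the announced closed form $\pm q^{-1}J_{1}^{2}$, $q^{-3}J_{1}^{2}$, $\pm q^{-1}J_{1}^{3}/J_{2}$, or $q^{-2}J_{1}^{3}/J_{2}$.

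The main obstacle is the bookkeeping: correctly tracking signs, $q$-exponent shifts, and cancellations across the surviving theta-function summands. No new technique beyond what is collected in Section \ref{section:prelim} and the two Corollaries should be needed, but an independent sanity check on the first several $q$-expansion coefficients of each side is prudent, as it catches any sign or exponent slip almost instantly.
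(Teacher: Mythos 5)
Your setup is sound and is exactly how the paper begins: $j(q^{2\pm k};q)=0$ kills one Appell--Lerch term of each $\mathfrak{h}$, and the elliptic/inversion laws show the surviving Appell--Lerch contributions match (both slots collapse to $m(q,q^3,-1)$, resp.\ $m(q,q^2,-1)$), so each $a_k$, $b_k$ reduces to a difference of the theta corrections in Corollaries \ref{corollary:f441-expansion} and \ref{corollary:f331-expansion}; in fact this cancellation is already recorded in the proofs of Propositions \ref{proposition:f441-reduce} and \ref{proposition:f331-reduce}, and the paper simply cites it. The divergence is in how the residual theta sums are finished. The paper grinds out only $b_1$: after the vanishing summands are discarded, the four survivors pair up and are collapsed with the addition formula (\ref{equation:H1Thm1.2B}) and product rearrangements. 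It then gets $b_2$ essentially for free by a functional-equation argument rather than a second theta computation: by Proposition \ref{proposition:symmetry-shift}, $b_2=\mathfrak{f}_{1,5,1}(q^4,q^4,q)+q^5\mathfrak{f}_{1,5,1}(q^7,q^7,q)$, and applying the flip (\ref{equation:fabc-flip}) and then the shift (\ref{equation:f-shift}) with $(\ell,k)=(1,1)$ converts this to $-q^{-1}\big(\mathfrak{f}_{1,5,1}(q^3,q^3,q)+q^4\mathfrak{f}_{1,5,1}(q^6,q^6,q)\big)=-q^{-1}b_1$, whence $b_2=q^{-2}J_1^3/J_2$. Your plan to evaluate every theta difference directly would also work (and is what you would need for $a_2=0$, which, unlike $a_0$, is not a difference of identical terms), but the symmetry trick cuts the bookkeeping roughly in half and is worth adopting. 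Finally, be aware that as written your proposal defers precisely those theta evaluations, which are the only nontrivial content of the lemma, so it is a correct and well-aimed strategy rather than a completed proof.
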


\begin{proof}[Proof of Proposition \ref{proposition:f441-reduce}] We recall Corollary \ref{corollary:f441-expansion}.  We first show that
{\allowdisplaybreaks \begin{align*}
\mathfrak{h}_{4,4,1}&(-q^{4+m},q^{2+m},q,-1,-1)-q^{-m}\mathfrak{h}_{4,4,1}(-q^{4-m},q^{2-m},q,-1,-1)\\
&=j(-q^{4+m};q^4)m\big ( q,q^3,-1\big )-q^{-m}j(-q^{4-m};q^4)m\big ( q,q^3,-1\big )\\
&=q^{-m}j(-q^{m};q^4)m\big ( q,q^3,-1\big )-q^{-m}j(-q^{m};q^4)m\big ( q,q^3,-1\big )\\
&=0,
\end{align*}}%
where we have used (\ref{equation:j-elliptic}).  As a result,
{\allowdisplaybreaks \begin{align*}
\mathfrak{f}_{4,4,1}&(-q^{4+m},q^{2+m},q)-q^{-m}\mathfrak{f}_{4,4,1}(-q^{4-m},q^{2-m},q)\\
%
&=-\sum_{d=0}^3\frac{q^{3\binom{d+1}{2}}j(q^{5+3d+m};q^4)j(q^{11-3d};q^{12})J_{12}^3j(-q^{3+3d-3m};q^{12})}{\overline{J}_{0,3}\overline{J}_{0,12}j(q^{2-3m};q^{12})j(-q^{1+3d};q^{12})}\\
&\ \ \ \ \ +q^{-m}\sum_{d=0}^3\frac{q^{3\binom{d+1}{2}}j(q^{5+3d-m};q^4)j(q^{11-3d};q^{12})J_{12}^3j(-q^{3+3d+3m};q^{12})}{\overline{J}_{0,3}\overline{J}_{0,12}j(q^{2+3m};q^{12})j(-q^{1+3d};q^{12})}.
\end{align*}
We make the substitution $m\rightarrow 4m+k$, where $k\in\mathbb{Z}$, $0\le k \le 3$:
\begin{align*}
\mathfrak{f}_{4,4,1}&(-q^{4+4m+k},q^{2+4m+k},q)-q^{-4m-k}\mathfrak{f}_{4,4,1}(-q^{4-4m-k},q^{2-4m-k},q)\\
&= -\sum_{d=0}^3\frac{q^{3\binom{d+1}{2}}j(q^{5+3d+4m+k};q^4)j(q^{11-3d};q^{12})J_{12}^3j(-q^{3+3d-12m-3k};q^{12})}{\overline{J}_{0,3}\overline{J}_{0,12}j(q^{2-12m-3k};q^{12})j(-q^{1+3d};q^{12})}\\
&\ \ \ \ \ +q^{-4m-k}\sum_{d=0}^3\frac{q^{3\binom{d+1}{2}}j(q^{5+3d-4m-k};q^4)j(q^{11-3d};q^{12})J_{12}^3j(-q^{3+3d+12m+3k};q^{12})}{\overline{J}_{0,3}\overline{J}_{0,12}j(q^{2+12m+3k};q^{12})j(-q^{1+3d};q^{12})}\\
&= -\sum_{d=0}^3\frac{(-1)^mq^{-m(5+3d+k)}q^{-4\binom{m}{2}}q^{m(3+3d-3k)}q^{-12\binom{-m}{2}}}{(-1)^mq^{m(2-3k)}q^{-12\binom{-m}{2}}} \\
&\ \ \ \ \ \cdot \frac{q^{3\binom{d+1}{2}}j(q^{5+3d+k};q^4)j(q^{11-3d};q^{12})J_{12}^3j(-q^{3+3d-3k};q^{12})}{\overline{J}_{0,3}\overline{J}_{0,12}j(q^{2-3k};q^{12})j(-q^{1+3d};q^{12})}\\
&\ \ \ \ \ +q^{-4m-k}\sum_{d=0}^3\frac{(-1)^mq^{m(5+3d-k)}q^{-4\binom{-m}{2}}q^{-m(3+3d+3k)}q^{-12\binom{m}{2}}}{(-1)^mq^{-m(2+3k)}q^{-12\binom{m}{2}}}\\
&\ \ \ \ \ \cdot \frac{q^{3\binom{d+1}{2}}j(q^{5+3d-k};q^4)j(q^{11-3d};q^{12})J_{12}^3j(-q^{3+3d+3k};q^{12})}{\overline{J}_{0,3}\overline{J}_{0,12}j(q^{2+3k};q^{12})j(-q^{1+3d};q^{12})} \\
&=\big ( \mathfrak{f}_{4,4,1}(-q^{4+k},q^{2+k},q)-q^{-k}\mathfrak{f}_{4,4,1}(-q^{4-k},q^{2-k},q) \big )q^{-2m^2-2m-mk}.\qedhere
\end{align*}} %
\end{proof}

\begin{proof}[Proof of Proposition \ref{proposition:f331-reduce}] We recall Corollary \ref{corollary:f331-expansion}.  We first show that
\begin{align*}
\mathfrak{h}_{3,3,1}&(-q^{3+m},q^{2+m},q)-q^{-m}\mathfrak{h}_{3,3,1}(-q^{3-m},q^{2-m},q)\\
&=j(-q^{3+m};q^3)m\big ( q,q^2,-1\big )-q^{-m}j(-q^{3-m};q^3)m\big ( q,q^2,-1\big )\\
&=q^{-m}j(-q^{m};q^3)m\big ( q,q^2,-1\big )-q^{-m}j(-q^{m};q^3)m\big ( q,q^2,-1\big )\\
&=0,
\end{align*}
where we have used (\ref{equation:j-elliptic}).  As a result, we have
{\allowdisplaybreaks \begin{align*}
\mathfrak{f}_{3,3,1}&(-q^{3+m},q^{2+m},q)-q^{-m}\mathfrak{f}_{3,3,1}(-q^{3-m},q^{2-m},q)\\
%
&=-\sum_{d=0}^{2}
\frac{q^{d(d+1)}j\big (q^{4+2d+m};q^{3}\big )  j\big (q^{5-2d};q^{6}\big ) J_{6}^3j\big (q^{1+2d-2m};q^{6}\big )}
{4\overline{J}_{2,8}\overline{J}_{6,24}j\big (-q^{-2m};q^6\big )j\big (-q^{1+2d};q^6\big )}\\
&\ \ \ \ \ +q^{-m}\sum_{d=0}^{2}
\frac{q^{d(d+1)}j\big (q^{4+2d-m};q^{3}\big )  j\big (q^{5-2d};q^{6}\big ) J_{6}^3j\big (q^{1+2d+2m};q^{6}\big )}
{4\overline{J}_{2,8}\overline{J}_{6,24}j\big (-q^{2m};q^6\big )j\big (-q^{1+2d};q^6\big )}.
\end{align*}}%
We replace $m\rightarrow 3m+k$, where $k\in\mathbb{Z}$ and $k\in\{0,1,2\}$:
{\allowdisplaybreaks \begin{align*}
\mathfrak{f}_{3,3,1}&(-q^{3+3m+k},q^{2+3m+k},q)-q^{-3m-k}\mathfrak{f}_{3,3,1}(-q^{3-3m-k},q^{2-3m-k},q)\\
&= -\sum_{d=0}^{2}
\frac{q^{d(d+1)}j\big (q^{4+2d+3m+k};q^{3}\big )  j\big (q^{5-2d};q^{6}\big ) J_{6}^3j\big (q^{1+2d-6m-2k};q^{6}\big )}
{4\overline{J}_{2,8}\overline{J}_{6,24}j\big (-q^{-6m-2k};q^6\big )j\big (-q^{1+2d};q^6\big )}\\
&\ \ \ \ \ +q^{-3m-k}\sum_{d=0}^{2}
\frac{q^{d(d+1)}j\big (q^{4+2d-3m-k};q^{3}\big )  j\big (q^{5-2d};q^{6}\big ) J_{6}^3j\big (q^{1+2d+6m+2k};q^{6}\big )}
{4\overline{J}_{2,8}\overline{J}_{6,24}j\big (-q^{6m+2k};q^6\big )j\big (-q^{1+2d};q^6\big )}\\
&= -\sum_{d=0}^{2}\frac{(-1)^mq^{m(1+2d-2k)}q^{-6\binom{-m}{2}}(-1)^mq^{-m(4+2d+k)}q^{-3\binom{m}{2}}}
{q^{m(-2k)}q^{-6\binom{-m}{2}}}\\
&\ \ \ \ \ \cdot
\frac{q^{d(d+1)}j\big (q^{4+2d+k};q^{3}\big )  j\big (q^{5-2d};q^{6}\big ) J_{6}^3j\big (q^{1+2d-2k};q^{6}\big )}
{4\overline{J}_{2,8}\overline{J}_{6,24}j\big (-q^{-2k};q^6\big )j\big (-q^{1+2d};q^6\big )}\\
&\ \ \ \ \ +q^{-3m-k}\sum_{d=0}^{2}\frac{(-1)^mq^{-m(1+2d+2k)}q^{-6\binom{m}{2}}(-1)^mq^{m(4+2d-k)}q^{-3\binom{-m}{2}}}
{q^{-m(2k)}q^{-6\binom{m}{2}}}\\
&\ \ \ \ \ \cdot
\frac{q^{d(d+1)}j\big (q^{4+2d-k};q^{3}\big )  j\big (q^{5-2d};q^{6}\big ) J_{6}^3j\big (q^{1+2d+2k};q^{6}\big )}
{4\overline{J}_{2,8}\overline{J}_{6,24}j\big (-q^{2k};q^6\big )j\big (-q^{1+2d};q^6\big )}\\
&=-\sum_{d=0}^{2}q^{-\frac{3}{2}m^2-\frac{3}{2}m-mk}\cdot \frac{q^{d(d+1)}j\big (q^{4+2d+k},q^{3}\big )  j\big (q^{5-2d};q^{6}\big ) J_{6}^3j\big (q^{1+2d-2k};q^{6}\big )}
{4\overline{J}_{2,8}\overline{J}_{6,24}j\big (-q^{-2k};q^6\big )j\big (-q^{1+2d};q^6\big )}\\
&\ \ \ \ \ +q^{-3m-k}\sum_{d=0}^{2}q^{-\frac{3}{2}m^2+\frac{3}{2}m-mk}\cdot 
\frac{q^{d(d+1)}j\big (q^{4+2d-k};q^{3}\big )  j\big (q^{5-2d};q^{6}\big ) J_{6}^3j\big (q^{1+2d+2k};q^{6}\big )}
{4\overline{J}_{2,8}\overline{J}_{6,24}j\big (-q^{2k};q^6\big )j\big (-q^{1+2d};q^6\big )}\\
&=\Big ( \mathfrak{f}_{3,3,1}(-q^{3+k},q^{2+k},q)
-q^{-k}\mathfrak{f}_{3,3,1}(-q^{3-k},q^{2-k},q)\Big ) q^{-\frac{3}{2}m^2-\frac{3}{2}m-mk}.\qedhere
\end{align*}}%
\end{proof}

\begin{proof}[Proof of Lemma \ref{lemma:f441-f331-evals}] The proofs of the evaluations of (\ref{equation:f441-evaluations}) and (\ref{equation:f331-evaluations}) are similar, so we will only do the latter.  That $b_0=0$ is trivial.  Using Corollay \ref{corollary:f331-expansion} and recalling from the proof of Proposition \ref{proposition:f331-reduce} that the $m(x,q,z)$ terms sum to zero, we have
\begin{align*}
b_1
&=-\sum_{d=0}^{2}
\frac{q^{d(d+1)}j\big (q^{5+2d};q^{3}\big )  j\big (q^{5-2d};q^{6}\big ) J_{6}^3j\big (q^{2d-1};q^{6}\big )}
{4\overline{J}_{2,8}\overline{J}_{6,24}j\big (-q^{-2};q^6\big )j\big (-q^{1+2d};q^6\big )}\\
&\ \ \ \ \ +q^{-1}\sum_{d=0}^{2}
\frac{q^{d(d+1)}j\big (q^{3+2d};q^{3}\big )  j\big (q^{5-2d};q^{6}\big ) J_{6}^3j\big (q^{3+2d};q^{6}\big )}
{4\overline{J}_{2,8}\overline{J}_{6,24}j\big (-q^{2};q^6\big )j\big (-q^{1+2d};q^6\big )}\\
%
&=
-2q^{-1}\frac{J_1 J_{1,6}^2 J_{6}^3}
{4\overline{J}_{2,8}\overline{J}_{6,24}\overline{J}_{2,6}\overline{J}_{1,6}}
-2q^{-1}\frac{J_1  J_{3,6}J_{6}^3J_{1,6}}
{4\overline{J}_{2,8}\overline{J}_{6,24}j\overline{J}_{2,6}\overline{J}_{3,6}},
\end{align*}
where we have used (\ref{equation:j-elliptic}) and simplified.  Continuing with the calculation we have
\begin{align*}
b_1
&=-2q^{-1}\cdot \frac{J_1 J_{1,6} J_{6}^3}{4\overline{J}_{2,8}\overline{J}_{6,24}\overline{J}_{2,6}}
\cdot \Big [ \frac{J_{1,6}\overline{J}_{3,6}
+J_{3,6}\overline{J}_{1,6}}{\overline{J}_{1,6}\overline{J}_{3,6}}\Big ] \\
&=-2q^{-1}\cdot \frac{J_1 J_{1,6} J_{6}^3}{4\overline{J}_{2,8}\overline{J}_{6,24}\overline{J}_{2,6}}
\cdot \Big [ \frac{2J_{4,12}^2}{\overline{J}_{1,6}\overline{J}_{3,6}}\Big ],
\end{align*}
where we used (\ref{equation:H1Thm1.2B}).   Elementary product rearrangements yield the result.  For the final evaluation we recall (\ref{equation:f151-f331})
{\allowdisplaybreaks \begin{align*}
b_2
&=f_{1,5,1}(q^{4},q^{4},q)+q^{5}f_{1,5,1}(q^{7},q^{7},q)\\
&=-q^{-1}f_{1,5,1}(q^{3},q^{3},q)-q^{-2}f_{1,5,1}(1,1,q)\\
&=-q^{-1}f_{1,5,1}(q^{3},q^{3},q)-q^{5}f_{1,5,1}(q^6,q^6,q)\\
&=-q^{-1}b_1,
\end{align*}}%
where for the second equality we used (\ref{equation:fabc-flip}), for the third equality we used (\ref{equation:f-shift}) with $(\ell,k)=(1,1)$, and for the last equality we used (\ref{equation:f151-f331}).
\end{proof}

\section{Motivation}\label{section:preview}

We explore the use of the heuristic in determining the modularity of triple-sums.  First we recall how the heuristic is useful in studying Hecke-type double-sums, see also \cite[Section 3]{HM}, \cite{M2014}.  We have \cite[$(1.15)$]{H2}
\begin{equation}
\sum_{\sg(r)=\sg(s)}\sg(r)c_{r,s}= \sum_{\sg(r)=\sg(s)}\sg(r)c_{r+\ell,s+k}+\sum_{r=0}^{\ell-1}\sum_{s\in \mathbb{Z}}c_{r,s}+\sum_{s=0}^{k-1}\sum_{r\in \mathbb{Z}}c_{r,s}.\label{equation:H2eq1.15}
\end{equation}
Letting $\ell, k \rightarrow \infty$ in (\ref{equation:H2eq1.15}) we have
\begin{equation}
\sum_{\sg(r)=\sg(s)}\sg(r)c_{r,s}\sim\sum_{r\ge0}\sum_{s\in \mathbb{Z}}c_{r,s}+\sum_{s\ge0}\sum_{r\in \mathbb{Z}}c_{r,s},\label{equation:double-sum-heuristic}
\end{equation}
where `$\sim$' again means `mod theta'.   This relation is useful in determining expansions of Hecke-type double-sums \cite{HM}.  As an example, consider
{\allowdisplaybreaks \begin{align*}
\sum_{\sg(r)=\sg(s)}&\sg(r)(-1)^{r+s}x^ry^sq^{\binom{r}{2}+2rs+\binom{s}{2}}\\
&\sim \sum_{r=0}^{\infty}\sum_{s\in\mathbb{Z}}(-1)^{r+s}x^ry^sq^{\binom{r}{2}+2rs+\binom{s}{2}}
+ \sum_{s=0}^{\infty}\sum_{r\in\mathbb{Z}}(-1)^{r+s}x^ry^sq^{\binom{r}{2}+2rs+\binom{s}{2}}\\
&\sim \sum_{r=0}^{\infty}(-1)^rx^rq^{\binom{r}{2}}\sum_{s\in\mathbb{Z}}(-1)^{s}y^sq^{2rs+\binom{s}{2}}
+ \sum_{s=0}^{\infty}(-1)^sq^{\binom{s}{2}}y^s\sum_{r\in\mathbb{Z}}(-1)^{r}x^rq^{\binom{r}{2}+2rs}\\
&\sim \sum_{r=0}^{\infty}(-1)^rx^rq^{\binom{r}{2}}j(yq^{2r};q)
+ \sum_{s=0}^{\infty}(-1)^sq^{\binom{s}{2}}y^sj(xq^{2s};q)\\
&\sim j(y;q)\sum_{r=0}^{\infty}(-1)^rx^rq^{\binom{r}{2}}y^{-2r}q^{-\binom{2r}{2}}
+ j(x;q)\sum_{s=0}^{\infty}(-1)^sq^{\binom{s}{2}}y^sx^{-2s}q^{-\binom{2s}{2}}\\
&\sim j(y;q)\sum_{r=0}^{\infty}(-1)^rq^{2r}x^ry^{-2r}q^{-3\binom{r+1}{2}}
+ j(x;q)\sum_{s=0}^{\infty}(-1)^sq^{2s}y^sx^{-2s}q^{-3\binom{s+1}{2}},
\end{align*}}%
where in the last line we have used the elliptic transformation property (\ref{equation:j-elliptic}).  Recalling the heuristic (\ref{equation:mxqz-heuristic}), we have
\begin{align*}
\sum_{\sg(r)=\sg(s)}&\sg(r)(-1)^{r+s}x^ry^sq^{\binom{r}{2}+2rs+\binom{s}{2}}
\sim j(y;q)m(q^2x/y^2,q^3,*)
+ j(x;q)m(q^2y/x^2,q^3,*).
\end{align*}
The `$\sim$' becomes an equality upon specialising the `$*$' \ 's and adding an appropriate theta function (\ref{equation:f121}).

\section{Triple-sum relations}\label{section:obtain-the-triple}
We obtain an identity for triple-sums analogous to (\ref{equation:H2eq1.15}).  Without loss of generality, we assume $R,S,T\ge 0$.  We write 
{\allowdisplaybreaks \begin{align*}
\sum_{\sg(r)=\sg(s)=\sg(t)}&c_{r,s,t}- \sum_{\sg(r)=\sg(s)=\sg(t)}c_{r+R,s+S,t+T}\\
&=\sum_{\sg(r)=\sg(s)=\sg(t)}c_{r,s,t}- \sum_{\sg(r-R)=\sg(s-S)=\sg(t-T)}c_{r,s,t}\\
&=\Big ( \sum_{r,s,t\ge0} +\sum_{r,s,t<0}\Big )c_{r,s,t}
-\Big ( \sum_{r-R,s-S,t-T\ge0} +\sum_{r-R,s-S,t-T<0}\Big )c_{r,s,t}\\
&=\Big ( \sum_{r,s,t\ge0} -\sum_{r-R,s-S,t-T\ge0}\Big )c_{r,s,t}
-\Big (\sum_{r-R,s-S,t-T<0}-\sum_{r,s,t<0} \Big )c_{r,s,t}.
\end{align*}}%
We rewrite some terms to have
{\allowdisplaybreaks \begin{align*}
\sum_{\sg(r)=\sg(s)=\sg(t)}&c_{r,s,t} - \sum_{\sg(r)=\sg(s)=\sg(t)}c_{r+R,s+S,t+T}\\
&=\Big [ \sum_{\substack{0\le r <R \\ s,t\ge 0}} 
+\sum_{\substack{r\ge R \\ s\ge S \\0\le t <T}}
+ \sum_{\substack{r\ge R \\ 0\le s<S\\t\ge T  }}
+\sum_{\substack{r\ge R \\ 0\le s<S\\0\le t < T  }}\Big ]c_{r,s,t}\\
& \ \ \ \ \ - \Big [ \sum_{\substack{0\le r<R \\ s<S\\ t< T}} 
+\sum_{\substack{r< 0 \\ s< 0 \\0 \le t < T}}
+ \sum_{\substack{r< 0 \\ 0\le s<S \\t < 0  }}
+\sum_{\substack{r< 0 \\ 0\le s<S\\0\le t < T  }}\Big ]c_{r,s,t}\\
&=\Big [ \sum_{\substack{0\le r <R \\ s,t\ge 0}} +\Big ( \sum_{\substack{r\ge 0 \\ s\ge 0 \\0\le t <T}}-\sum_{\substack{ 0\le r <R \\ s\ge S \\0\le t <T}}-\sum_{\substack{r\ge R \\ 0\le s<S \\0\le t <T}}-\sum_{\substack{0\le r<R \\ 0\le s<S \\0\le t <T}} \Big )\\
&\ \ \ \ \ \ \ \ \ \ + \Big ( \sum_{\substack{r\ge 0 \\ 0\le s<S\\t\ge 0  }}- \sum_{\substack{0\le r< R \\ 0\le s<S\\t\ge T  }} - \sum_{\substack{r\ge R \\ 0\le s<S\\ 0\le t<T  }} - \sum_{\substack{0\le r < R \\ 0\le s<S\\0\le t < T  }}\Big ) +\sum_{\substack{r\ge R \\ 0\le s<S\\0\le t < T  }}\Big ] c_{r,s,t}\\
& \ \ \ \ \ - \Big [ \Big ( \sum_{\substack{0\le r<R \\ s<0\\ t< 0}}+\sum_{\substack{0\le r<R \\ 0 \le s<S \\ t< 0}}+ \sum_{\substack{0\le r<R \\ s<0\\ 0\le t<T }}+ \sum_{\substack{0\le r<R \\ 0\le s<S\\ 0\le t<T}} \Big )
+\sum_{\substack{r< 0 \\ s< 0 \\0 \le t < T}}+ \sum_{\substack{r< 0 \\ 0\le s<S \\t < 0  }}+\sum_{\substack{r< 0 \\ 0\le s<S\\0\le t < T  }}\Big ] c_{r,s,t}.
\end{align*}}%
Rearranging terms, we have
{\allowdisplaybreaks \begin{align}
\sum_{\sg(r)=\sg(s)=\sg(t)}&c_{r,s,t}- \sum_{\sg(r)=\sg(s)=\sg(t)}c_{r+R,s+S,t+T}\label{equation:generic-shift}\\
&=\sum_{r=0}^{R-1}\sum_{\sg(s)=\sg(t)}\sg(s) c_{r,s,t} +\sum_{s=0}^{S-1}\sum_{\sg(r)=\sg(t)}\sg(r) c_{r,s,t} +\sum_{t=0}^{T-1}\sum_{\sg(r)=\sg(s)}\sg(r) c_{r,s,t} \notag\\
&\ \ \ \ \ -\sum_{r=0}^{R-1}\sum_{t=0}^{T-1}\sum_{s\in \mathbb{Z}}c_{r,s,t}-\sum_{s=0}^{S-1}\sum_{t=0}^{T-1}\sum_{r\in \mathbb{Z}}c_{r,s,t} 
-\sum_{r=0}^{R-1}\sum_{s=0}^{S-1}\sum_{t\in \mathbb{Z}}c_{r,s,t}.\notag
\end{align}}%
Letting $R,S,T \rightarrow \infty$, and discarding the $c_{r+R,s+S,t+T}$ term as in (\ref{equation:double-sum-heuristic}), we arrive at 
{\allowdisplaybreaks \begin{align}
\sum_{\sg(r)=\sg(s)=\sg(t)}&c_{r,s,t}\label{equation:HeckeTriple-id}\\
&\sim \sum_{r=0}^{\infty}\sum_{\sg(s)=\sg(t)}\sg(s)c_{r,s,t}+\sum_{s=0}^{\infty}\sum_{\sg(r)=\sg(t)}\sg(r)c_{r,s,t}
+\sum_{t=0}^{\infty}\sum_{\sg(r)=\sg(s)}\sg(r)c_{r,s,t}\notag\\
&\ \ \ \ \ - \sum_{r=0}^{\infty}\sum_{t=0}^{\infty}\sum_{s\in \mathbb{Z}}c_{r,s,t}
- \sum_{r=0}^{\infty}\sum_{s=0}^{\infty}\sum_{t\in \mathbb{Z}}c_{r,s,t}
- \sum_{s=0}^{\infty}\sum_{t=0}^{\infty}\sum_{r\in \mathbb{Z}}c_{r,s,t},\notag
\end{align}}%
where `$\sim$' is `mod theta' in determining the modularity of triple sums of the form (\ref{equation:triple-sum-def}).

We finish with two propositions whose proofs are straightforward.
\begin{proposition} We have
{\allowdisplaybreaks \begin{align}
\mathfrak{g}_{a,b,c,d,e,f}&(x,y,z,q)\label{equation:g-shift}\\
&=(-1)^{R+S+T}x^{R}y^{S}z^{T}q^{a\binom{R}{2}+bRS+c\binom{S}{2}+dRT+eST+f\binom{T}{2}}\notag \\
& \ \ \ \ \ \cdot \mathfrak{g}_{a,b,c,d,e,f}(q^{aR+bS+dT}x,q^{bR+cS+eT}y,q^{dR+eS+fT}z,q)\notag\\
& \ \ \ \ \ + \sum_{r=0}^{R-1}(-1)^{r}x^rq^{a\binom{r}{2}}\mathfrak{f}_{c,e,f}(q^{br}y,q^{dr}z,q)
+ \sum_{s=0}^{S-1}(-1)^{s}y^sq^{c\binom{s}{2}}\mathfrak{f}_{a,d,f}(q^{bs}x,q^{es}z,q)\notag\\
& \ \ \ \ \ + \sum_{t=0}^{T-1}(-1)^{t}z^tq^{f\binom{t}{2}}\mathfrak{f}_{a,b,c}(q^{dt}x,q^{et}y,q)\notag\\
& \ \ \ \ \ -\sum_{r=0}^{R-1} (-1)^{r}x^rq^{a\binom{r}{2}}
\sum_{t=0}^{T-1}(-1)^{t}z^tq^{drt+f\binom{t}{2}}j(q^{br+et}y;q^{c})\notag\\
& \ \ \ \ \ -\sum_{s=0}^{S-1} (-1)^{s}y^sq^{c\binom{s}{2}}
\sum_{t=0}^{T-1}(-1)^{t}z^tq^{est+f\binom{t}{2}}j(q^{bs+dt}x;q^{a})\notag\\
& \ \ \ \ \ -\sum_{r=0}^{R-1} (-1)^{r}x^rq^{a\binom{r}{2}}
\sum_{s=0}^{S-1}(-1)^{s}y^sq^{brs+c\binom{t}{2}}j(q^{dr+es}z;q^{f}).\notag
\end{align}}%
\end{proposition}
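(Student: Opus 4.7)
The proposition is a direct specialization of the general shift formula (\ref{equation:generic-shift}) derived earlier in this section. The plan is to substitute
$$c_{r,s,t} := (-1)^{r+s+t}\, x^r y^s z^t\, q^{Q(r,s,t)}, \qquad Q(r,s,t):=a\tbinom{r}{2}+brs+c\tbinom{s}{2}+drt+est+f\tbinom{t}{2},$$
into (\ref{equation:generic-shift}) and identify each resulting piece with a term on the right-hand side of the statement.

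First I would compute the shifted summand $c_{r+R,s+S,t+T}$. Using $\binom{r+R}{2}=\binom{r}{2}+rR+\binom{R}{2}$ together with the usual bilinear identities for the cross terms, the exponent splits as
$$Q(r+R,s+S,t+T) = Q(r,s,t) + r(aR+bS+dT) + s(bR+cS+eT) + t(dR+eS+fT) + Q(R,S,T).$$
The three linear cross-terms can be absorbed into shifts of the parameters $x,y,z$, giving
$$\sum_{\sg(r)=\sg(s)=\sg(t)} c_{r+R,s+S,t+T} = (-1)^{R+S+T} x^R y^S z^T q^{Q(R,S,T)}\, \mathfrak{g}_{a,b,c,d,e,f}(q^{aR+bS+dT}x,\, q^{bR+cS+eT}y,\, q^{dR+eS+fT}z,\, q),$$
which, after transposing to the other side of (\ref{equation:generic-shift}), produces the first term of the proposition.

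Next I would process the three boundary double-sums in (\ref{equation:generic-shift}). For the $r$-boundary $\sum_{r=0}^{R-1}\sum_{\sg(s)=\sg(t)}\sg(s)\,c_{r,s,t}$, pulling out the factor $(-1)^r x^r q^{a\binom{r}{2}}$ reduces the inner sum to $\mathfrak{f}_{c,e,f}(q^{br}y,\, q^{dr}z,\, q)$, because the mixed linear terms $q^{brs+drt}$ are precisely those produced by the shifts $y\mapsto q^{br}y$ and $z\mapsto q^{dr}z$ in the definition (\ref{equation:fabc-def-prelim}). The $s$- and $t$-boundaries are treated symmetrically and yield the other two $\mathfrak{f}$-terms displayed in the proposition.

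Finally, for each of the three subtracted triple sums in which one index ranges over all of $\mathbb{Z}$, the unrestricted index contracts to a theta function via the sum-side of Jacobi's triple product. For example, when $s$ is unrestricted,
$$\sum_{s\in\mathbb{Z}}(-1)^s y^s q^{(br+et)s + c\binom{s}{2}} = j(q^{br+et}y;\, q^c),$$
and the remaining outer sums over $r\in\{0,\dots,R-1\}$ and $t\in\{0,\dots,T-1\}$ reproduce exactly one of the three subtracted double products in the statement; the other two follow by symmetry. Collecting all contributions proves the identity. The computations are purely bookkeeping; the only genuine care needed is in tracking how the off-diagonal coefficients $b,d,e$ are distributed across the three simultaneous shifts, which is most transparent if one writes $Q$ through its associated symmetric matrix before shifting.
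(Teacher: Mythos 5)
Your proposal is correct and is essentially the argument the paper intends: the paper omits the proof as ``straightforward'' precisely because the proposition is the specialization of the shift identity (\ref{equation:generic-shift}) to $c_{r,s,t}=(-1)^{r+s+t}x^ry^sz^tq^{a\binom{r}{2}+brs+c\binom{s}{2}+drt+est+f\binom{t}{2}}$, with the boundary double-sums contracting to $\mathfrak{f}$-functions and the unrestricted single sums to theta functions exactly as you describe (your computation also confirms that the exponent $c\binom{t}{2}$ in the last displayed line of the statement is a typo for $c\binom{s}{2}$).
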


\begin{proposition} We have
\begin{equation}
\mathfrak{g}_{a,b,c,d,e,f}(x,y,z,q)=-\frac{q^{a+b+c+d+e+f}}{xyz}
\mathfrak{g}_{a,b,c,d,e,f}(\frac{q^{2a+b+d}}{x},\frac{q^{b+2c+e}}{y},\frac{q^{d+e+2f}}{z},q).\label{equation:triple-flip}
\end{equation}
\end{proposition}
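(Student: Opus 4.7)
The plan is to prove this flip identity by direct substitution in the defining series, mimicking the proof strategy of Proposition \ref{proposition:f-flip} for double-sums. Specifically, I would make the change of variables $r \mapsto -r-1$, $s \mapsto -s-1$, $t \mapsto -t-1$ in the sum
\begin{equation*}
\mathfrak{g}_{a,b,c,d,e,f}(x,y,z,q)=\Big ( \sum_{r,s,t\ge 0}+\sum_{r,s,t<0}\Big )(-1)^{r+s+t}x^{r}y^{s}z^{t} q^{a\binom{r}{2}+brs+c\binom{s}{2}+drt+est+f\binom{t}{2}}.
\end{equation*}
This substitution interchanges the two index regions $\{r,s,t\ge 0\}$ and $\{r,s,t<0\}$, so the overall sum is preserved in structure while being expressed in the new variables on the right-hand side.

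Next I would carefully track how each piece of the summand transforms. The sign changes as $(-1)^{r+s+t}\mapsto -(-1)^{r+s+t}$, and the monomial $x^r y^s z^t$ becomes $x^{-r-1}y^{-s-1}z^{-t-1}$. For the quadratic exponent, I would use the identity $\binom{-n-1}{2}=\binom{n+2}{2}$ and compute the differences
\begin{align*}
a\tbinom{-r-1}{2}-a\tbinom{r}{2}&=a(2r+1), & b(-r-1)(-s-1)-brs&=b(r+s+1),\\
c\tbinom{-s-1}{2}-c\tbinom{s}{2}&=c(2s+1), & d(-r-1)(-t-1)-drt&=d(r+t+1),\\
f\tbinom{-t-1}{2}-f\tbinom{t}{2}&=f(2t+1), & e(-s-1)(-t-1)-est&=e(s+t+1).
\end{align*}
Adding these contributions groups neatly as $r(2a+b+d)+s(b+2c+e)+t(d+e+2f)+(a+b+c+d+e+f)$, which is exactly what is needed to convert the monomials $x^r y^s z^t$ into $(q^{2a+b+d}/x)^r(q^{b+2c+e}/y)^s(q^{d+e+2f}/z)^t$ after factoring out the overall constant $q^{a+b+c+d+e+f}/(xyz)$.

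The final step is to collect: the sign flip combines with the swap of the two index regions to recover the original $\mathfrak{g}$ shape but now evaluated at the flipped arguments, multiplied by the prefactor $-q^{a+b+c+d+e+f}/(xyz)$. No convergence subtleties arise since the triple-sum defining $\mathfrak{g}_{a,b,c,d,e,f}$ is an absolutely convergent (Hecke-type indefinite) series under the standing positivity assumptions on $a,\ldots,f$, so the reindexing is justified term-by-term. I do not anticipate any real obstacle: the entire proof is essentially an exercise in careful bookkeeping with $\binom{n}{2}$ and the mixed quadratic terms, and the only place to slip is in verifying that the six cross-term differences collapse to the linear expression claimed above, which is why I would carry out that calculation explicitly as the central computation of the proof.
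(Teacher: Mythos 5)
Your proof is correct: the involution $(r,s,t)\mapsto(-r-1,-s-1,-t-1)$ swaps the two index regions, the six exponent differences do collapse to $r(2a+b+d)+s(b+2c+e)+t(d+e+2f)+(a+b+c+d+e+f)$, and absolute convergence on the two octants justifies the reindexing. The paper omits the proof as ``straightforward,'' and your argument is exactly the intended one (the direct analogue of the flip in Proposition \ref{proposition:f-flip}).
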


\section{A heuristic guide to mock theta function identities}\label{section:Zwegers}
The left-hand sides of identities (\ref{equation:chi0-triple}) and (\ref{equation:chi1-triple}) may be viewed as specializations of
\begin{equation}
\mathfrak{g}_{1,2,1,2,2,1}(x,y,z,q):=\Big ( \sum_{k,l,m\ge 0}+\sum_{k,l,m<0}\Big )(-1)^{k+l+m}
q^{\binom{k}{2}+2kl+\binom{l}{2}+2km+2lm+\binom{m}{2}}x^ky^lz^m.\label{equation:triple-general}
\end{equation}
In particular, Identities (\ref{equation:chi0-triple}) and (\ref{equation:chi1-triple}) may be rewritten
\begin{gather}
2-\frac{1}{J_1^2}\cdot \mathfrak{g}_{1,2,1,2,2,1}(q,q,q,q)=\chi_0(q),\label{equation:goal1}\\
\frac{1}{J_1^2}\cdot \mathfrak{g}_{1,2,1,2,2,1}(q^2,q^2,q^2,q)=\chi_1(q).\label{equation:goal2}
\end{gather}
Using  (\ref{equation:5th-chi0(q)}) and (\ref{equation:5th-chi1(q)}), we may rewrite the above identities as
{\allowdisplaybreaks \begin{gather}
2-\frac{1}{J_1^2}\cdot \mathfrak{g}_{1,2,1,2,2,1}(q,q,q,q)
=2-3m(q^7,q^{15},q^9)-3q^{-1}m(q^2,q^{15},q^4)+\frac{2J_5^2J_{2,5}}{J_{1,5}^2},\label{equation:goal1-final}\\
\frac{1}{J_1^2}\cdot \mathfrak{g}_{1,2,1,2,2,1}(q^2,q^2,q^2,q)
=-3q^{-1}m(q^4,q^{15},q^3)-3q^{-2}m(q,q^{15},q^2)-\frac{2J_5^2J_{1,5}}{J_{2,5}^2}.\label{equation:goal2-final}
\end{gather}}%
In this section, we demonstrate how heuristic methods take us from the left-hand sides of (\ref{equation:goal1-final}) and (\ref{equation:goal2-final}) to their respective right-hand sides up to a theta function.

Let us apply the heuristic to (\ref{equation:triple-general}).  We recall the (slightly rewritten) general form (\ref{equation:HeckeTriple-id}):
{\allowdisplaybreaks \begin{align}
&\sum_{\sg(k)=\sg(l)=\sg(m)}c_{k,l,m}\label{equation:HeckeTriple-2}\\
&\ \ \ \ \ \sim \sum_{k=0}^{\infty}\sum_{\sg(l)=\sg(m)}\sg(l)c_{k,l,m}+\sum_{l=0}^{\infty}\sum_{\sg(k)=\sg(m)}\sg(k)c_{k,l,m}
+\sum_{m=0}^{\infty}\sum_{\sg(k)=\sg(l)}\sg(k)c_{k,l,m}\notag\\
&\ \ \ \ \ \ \ \ \ \ - \sum_{k=0}^{\infty}\sum_{m=0}^{\infty}\sum_{l\in \mathbb{Z}}c_{k,l,m}
- \sum_{k=0}^{\infty}\sum_{l=0}^{\infty}\sum_{m\in \mathbb{Z}}c_{k,l,m}
- \sum_{l=0}^{\infty}\sum_{m=0}^{\infty}\sum_{k\in \mathbb{Z}}c_{k,l,m}.\notag
\end{align}}%
We have two types of summands on the right-hand side of (\ref{equation:HeckeTriple-2}).  We consider the first type of summand:
{\allowdisplaybreaks \begin{align}
\sum_{k\ge 0}&\sum_{l\ge 0}\sum_{m\in \mathbb{Z}}(-1)^{k+l+m}
q^{\binom{k}{2}+\binom{l}{2}+\binom{m}{2}+2kl+2km+2lm}x^ky^lz^m\label{equation:first-type}\\
&\sim \sum_{k\ge 0}(-1)^kx^kq^{\binom{k}{2}}\sum_{l\ge 0}(-1)^{l}y^lq^{\binom{l}{2}+2kl}\sum_{m\in \mathbb{Z}}(-1)^{m}
q^{\binom{m}{2}+2km+2lm}z^m\notag \\
&\sim \sum_{k\ge 0}(-1)^kx^kq^{\binom{k}{2}}\sum_{l\ge 0}(-1)^{l}y^lq^{\binom{l}{2}+2kl}j(zq^{2k+2l};q).\notag 
\end{align}}%
We note that if we set $z$ to be $q$ or $q^2$ then $j(z;q)=0$, so we can ignore the contributions from the second line of (\ref{equation:HeckeTriple-2}).  We consider the second type of summand:
{\allowdisplaybreaks \begin{align}
\sum_{m\ge 0}&\sum_{k,l}\sg(k,l)(-1)^{k+l+m}
q^{\binom{k}{2}+\binom{l}{2}+\binom{m}{2}+2kl+2km+2lm}x^ky^lz^m\label{equation:second-type}\\
&\sim\sum_{m\ge 0}(-1)^mq^{\binom{m}{2}}z^m\sum_{k,l}\sg(k,l)(-1)^{k+l}
q^{\binom{k}{2}+\binom{l}{2}+2kl+2km+2lm}x^ky^l\notag\\
&\sim \sum_{m\ge 0}(-1)^mq^{\binom{m}{2}}z^m\sum_{k,l}\sg(k,l)(-1)^{k+l}
q^{\binom{k}{2}+2kl+\binom{l}{2}}(q^{2m}x)^k(q^{2m}y)^l\notag\\
&\sim \sum_{m\ge 0}(-1)^mq^{\binom{m}{2}}z^m\mathfrak{f}_{1,2,1}(q^{2m}x,q^{2m}y,q).\notag
\end{align}}%

We rewrite the last line of (\ref{equation:second-type}).  We recall (\ref{equation:f121}):
{\allowdisplaybreaks \begin{align}
\mathfrak{ f}_{1,2,1}&(x,y,q)\label{equation:n1p1}\\
&=j(y;q)m\big (\tfrac{q^2x}{y^2},q^3,-1\big )+j(x;q)m\big (\tfrac{q^2y}{x^2},q^3,-1\big )
- \frac{yJ_{3}^3j(-x/y;q)j(q^2xy;q^3)}
{\overline{J}_{0,3}j(-qy^2/x,-qx^2/y;q^3)}.\notag
\end{align}}%
Let us consider the double-sum:
\begin{equation}
\mathfrak{f}_{1,2,1}(q^{2m}x,q^{2m}y,q).
\end{equation}
The first thing we note is that if $x$ and $y$ are either $q$ or $q^2$, then theta coefficients of the $m(x,q,z)$ functions in (\ref{equation:n1p1}) are both zero, and both $m(x,q,z)$ functions are defined.  So we only need to consider the theta function from the right-hand side of (\ref{equation:n1p1}) which after $x\rightarrow q^{2m}x$ and $y\rightarrow q^{2m}y$ is
\begin{equation}
\mathfrak{f}_{1,2,1}(q^{2m}x,q^{2m}y,q)
= - \frac{yq^{2m}J_{3}^3j(-x/y;q)j(q^{2+4m}xy;q^3)}{\overline{J}_{0,3}j(-q^{1+2m}y^2/x,-q^{1+2m}x^2/y;q^3)}.
\end{equation}
In order to use the elliptic transformation property (\ref{equation:j-elliptic}), we will have to consider $m$ mod three, so let $m \rightarrow 3m+a$.  We first consider the case $a=0$
{\allowdisplaybreaks \begin{align*}
-&\sum_{m\ge 0}(-1)^mq^{\binom{3m}{2}}z^{3m} \frac{yq^{6m}J_{3}^3j(-x/y;q)j(q^{2+12m}xy;q^3)}{\overline{J}_{0,3}j(-q^{1+6m}y^2/x,-q^{1+6m}x^2/y;q^3)}\\
&\sim -\frac{yJ_{3}^3j(-x/y;q)j(q^2xy;q^3)}{\overline{J}_{0,3}j(-qy^2/x,-qx^2/y;q^3)}\\
& \ \ \ \ \ \ \ \ \ \ \cdot \sum_{m\ge 0}(-1)^mq^{\binom{3m}{2}}z^{3m} q^{6m}
\frac{(q^2xy)^{-4m}q^{-3\binom{4m}{2}}}{(-qy^2/x)^{-2m}(-qx^2/y)^{-2m}q^{-6\binom{2m}{2}}}\\
&\sim -\frac{yJ_{3}^3j(-x/y;q)j(q^2xy;q^3)}{\overline{J}_{0,3}j(-qy^2/x,-qx^2/y;q^3)}
\sum_{m\ge 0}(-1)^m\big (\frac{z^3}{x^2y^2}\big )^{m}q^{-15m^2/2+m/2}\\
&\sim -\frac{yJ_{3}^3j(-x/y;q)j(q^2xy;q^3)}{\overline{J}_{0,3}j(-qy^2/x,-qx^2/y;q^3)}
 m\Big (\frac{q^8z^3}{x^2y^2},q^{15},*\Big ).
\end{align*}}%

We consider $m \rightarrow 3m+1$.  Then
{\allowdisplaybreaks \begin{align*}
&\sum_{m\ge 0}(-1)^mq^{\binom{3m+1}{2}}z^{3m+1} \frac{yq^{6m+2}J_{3}^3j(-x/y;q)j(q^{6+12m}xy;q^3)}{\overline{J}_{0,3}j(-q^{3+6m}y^2/x,-q^{3+6m}x^2/y;q^3)}\\
&\sim \frac{yJ_{3}^3j(-x/y;q)j(xy;q^3)}{\overline{J}_{0,3}j(-y^2/x,-x^2/y;q^3)}\\
& \ \ \ \ \ \ \ \ \ \ \cdot \sum_{m\ge 0}(-1)^mq^{\binom{3m+1}{2}}z^{3m+1} q^{6m+2}
\frac{(xy)^{-4m-2}q^{-3\binom{4m+2}{2}}}{(-y^2/x)^{-2m-1}(-x^2/y)^{-2m-1}q^{-6\binom{2m+1}{2}}}\\
&\sim \frac{yJ_{3}^3j(-x/y;q)j(xy;q^3)}{\overline{J}_{0,3}j(-y^2/x,-x^2/y;q^3)}
\frac{z}{xy}
\sum_{m\ge 0}(-1)^m\big (\frac{z^3}{x^2y^2}\big )^{m}q^{-15m^2/2-9m/2-1}\\
&\sim \frac{yJ_{3}^3j(-x/y;q)j(xy;q^3)}{\overline{J}_{0,3}j(-y^2/x,-x^2/y;q^3)}
\cdot \frac{z}{xyq}\cdot m\Big (\frac{q^3z^3}{x^2y^2},q^{15},*\Big ).
\end{align*}}%

We let $m\rightarrow 3m+2$.  We have
{\allowdisplaybreaks \begin{align*}
-&\sum_{m\ge 0}(-1)^{3m+2}q^{\binom{3m+2}{2}}z^{3m+2}
 \frac{yq^{6m+4}J_{3}^3j(-x/y;q)j(q^{12m+10}xy;q^3)}{\overline{J}_{0,3}j(-q^{6m+5}y^2/x,-q^{6m+5}x^2/y;q^3)}\\
 &\sim - \frac{yJ_{3}^3j(-x/y;q)j(qxy;q^3)}{\overline{J}_{0,3}j(-q^{2}y^2/x,-q^{2}x^2/y;q^3)}\\
& \ \ \ \ \ \ \ \ \ \  \cdot \sum_{m\ge 0}(-1)^{m}q^{\binom{3m+2}{2}}z^{3m+2}q^{6m+4}
\frac{(-1)^{4m+3}q^{-3\binom{4m+3}{2}}(qxy)^{-(4m+3)}}
{q^{-6\binom{2m+1}{2}}(q^2x^2/y)^{-(2m+1)}(q^2y^2/x)^{-(2m+1)}}\\
 &\sim  \frac{yJ_{3}^3j(-x/y;q)j(qxy;q^3)}{\overline{J}_{0,3}j(-q^{2}y^2/x,-q^{2}x^2/y;q^3)}
  \cdot \frac{z^2}{x^2y^2} \cdot \sum_{m\ge 0}(-1)^{m}\Big ( \frac{z^3}{x^2y^2}\Big )^{m} 
q^{-15m^2/2-19m/2-3}\\
 &\sim  \frac{yJ_{3}^3j(-x/y;q)j(qxy;q^3)}{\overline{J}_{0,3}j(-q^{2}y^2/x,-q^{2}x^2/y;q^3)}
  \cdot \frac{z^2}{x^2y^2q^3} \cdot 
m\Big (\frac{z^3}{q^2x^2y^2},q^{15},*\Big ).
\end{align*}}%
For $x,y,z$ integral powers of $q$, we suspect that we have
{\allowdisplaybreaks \begin{align}
\mathfrak{g}_{1,2,1,2,2,1}&(x,y,z,q)\\
&\sim
-y\cdot \frac{J_{3}^3j(-x/y;q)j(q^2xy;q^3)}{\overline{J}_{0,3}j(-qy^2/x,-qx^2/y;q^3)}
 \cdot m\Big (\frac{q^8z^3}{x^2y^2},q^{15},*\Big ) + idem(z;x,y)\notag \\
& \ \ \ \ \ +  \frac{z}{xq} \cdot \frac{J_{3}^3j(-x/y;q)j(xy;q^3)}{\overline{J}_{0,3}j(-y^2/x,-x^2/y;q^3)}
\cdot m\Big (\frac{q^3z^3}{x^2y^2},q^{15},*\Big )+idem(z;x,y)\notag \\
&\ \ \ \ \ +   \frac{z^2}{x^2yq^3} \cdot \frac{J_{3}^3j(-x/y;q)j(qxy;q^3)}{\overline{J}_{0,3}j(-q^{2}y^2/x,-q^{2}x^2/y;q^3)}
 \cdot  m\Big (\frac{z^3}{q^2x^2y^2},q^{15},*\Big )+idem(z;x,y),\notag 
\end{align}}%
where `$\sim$' means modulo a theta function and $idem(z;x;y)$ means the preceding term is repeated twice--once with $z$ and $x$ swapped and once with $z$ and $y$ swapped.

Let us try the values for $\chi_0(q)$, i.e. $x=y=z=q$.  In the third summand on the the right-hand side, we have a $j(q^3;q^3)=0$ in the numerator of the quotient of theta functions, so only the first two summands contribute.  Using (\ref{equation:chi0-triple}) gives
{\allowdisplaybreaks \begin{align*}
2-\chi_0(q)&=\frac{1}{J_1^2}\cdot \mathfrak{g}_{1,2,1,2,2,1}(q,q,q,q)\\
&\  \sim -3q \frac{J_3^3j(-1;q)j(q^4;q^3)}{J_1^2\overline{J}_{0,3}j(-q^2;q^3)^2}m(q^7,q^{15},*)
+3q^{-1}\frac{J_3^3j(-1;q)j(q^2;q^3)}{J_1^2\overline{J}_{0,3}j(-q;q^3)^2}m(q^2,q^{15},*)\\
&\  \sim 3 \frac{J_3^3\overline{J}_{0,1}J_1}{J_1^2\overline{J}_{0,3}\overline{J}_{1,3}^2}m(q^7,q^{15},*)
+3q^{-1}\frac{J_3^3\overline{J}_{0,1}J_1}{J_1^2\overline{J}_{0,3}\overline{J}_{1,3}^2}m(q^2,q^{15},*)\\
&\  \sim 3m(q^7,q^{15},*) + 3q^{-1}m(q^{2},q^{15},*),
\end{align*}}%
where we have used (\ref{equation:j-elliptic}) and product rearrangements.  This agrees with (\ref{equation:5th-chi0(q)}) up to a theta function.

Let us the values for $\chi_0(q)$, i.e. $x=y=z=q^2$.   In the first summand on the the right-hand side, we have a $j(q^6;q^3)=0$ in the numerator of the quotient of theta functions, so only the second and third summands contribute.  Using (\ref{equation:chi1-triple}), we arrive at
\begin{align*}
\chi_1(q)=\frac{1}{J_1^2}\cdot \mathfrak{g}_{1,2,1,2,2,1}(q^2,q^2,q^2,q) &\sim -3q^{-2}m(q,q^{15},*)-3q^{-5}m(q^{-4},q^{15},*)\\
&\sim -3q^{-2}m(q,q^{15},*)-3q^{-1}m(q^{4},q^{15},*),
\end{align*}
where we have used (\ref{equation:mxqz-flip}).  This agrees with (\ref{equation:5th-chi1(q)}) up to a theta function.

\section{A heuristic guide to false theta function identities}\label{section:KimLovejoy}  We recall a theorem of Kim and Lovejoy: 
\begin{theorem}\cite[Theorem $1.1$]{KL} We have
{\allowdisplaybreaks \begin{align}
\sum_{n\ge 0}\frac{(q)_{2n}q^n}{(aq,q/a)_n}&=(1-a)\sum_{\substack{r,s\ge 0 \\ r\equiv s \pmod 2}}(-1)^ra^{\frac{r+s}{2}}q^{\frac{3}{2}+rs+\frac{1}{2}r+s}\notag \\
& \ \ \ \ \ +\frac{(q)_{\infty}}{(aq,q/a)_{\infty}}\sum_{r\ge 0}(-1)^ra^{2r+1}q^{3r(r+1)/2},\\
\sum_{n=0}^{\infty}\frac{(q;q^2)_n(q)_nq^{n}}{(aq,q/a)_{n}}&=(1-a)\sum_{\substack{r,s\ge 0\\ r\equiv s \pmod 2}}(-1)^ra^{\frac{r+s}{2}}q^{rs+\frac{1}{2}r+\frac{1}{2}s}\notag\\
&\ \ \ \ \  +\frac{(q)_{\infty}}{(aq,q/a,-q)_{\infty}}\sum_{r\ge 0}(-1)^ra^{3r+1}q^{3r^2+2r}(1+aq^{2r+1}).
\end{align}}%
\end{theorem}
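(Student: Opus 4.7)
The plan is to prove these Kim--Lovejoy identities via the Bailey pair machinery, in the spirit of recent work on Hecke-type false theta identities. Throughout, one should keep in mind that the right-hand sides split as (partial theta)$+$(Hecke-type indefinite sum), with the former carrying the non-holomorphic/false theta behaviour and the latter producing the genuine convergent double-sum piece.

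For the first identity I would look for a Bailey pair $(\alpha_n,\beta_n)$ relative to $(a,q)$, that is, satisfying
$$\beta_n=\sum_{k=0}^{n}\frac{\alpha_k}{(q;q)_{n-k}(aq;q)_{n+k}},$$
with $\alpha_n$ chosen so that the image of $\beta_n$ under Bailey's lemma in a limit producing the numerator $(q)_{2n}q^n$ (concretely, a very-well-poised specialisation taking $\rho_1,\rho_2\to\infty$) matches the left-hand side $\sum_{n\ge 0}(q)_{2n}q^n/(aq,q/a)_n$, and simultaneously the $\alpha_k$-side contains a summand proportional to $(-1)^k a^{2k+1}q^{3k(k+1)/2}$. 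That term, summed against the constant produced by Bailey's lemma, is what becomes the partial theta piece $(q)_\infty/(aq,q/a)_\infty \cdot \sum_{r\ge 0}(-1)^r a^{2r+1}q^{3r(r+1)/2}$ on the right.

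Having applied Bailey's lemma once, the remaining part of the transform is a double sum indexed by the original summation variable and by the Bailey-pair index; I would then rewrite it in indefinite Hecke form. The substitution $r=u+v$, $s=u-v$ is natural here because it dissolves the parity constraint $r\equiv s\pmod{2}$ and replaces $(r+s)/2$ with $u$, producing an unconstrained double-sum over $\mathbb{Z}$ with a quadratic exponent in $u,v$, which is exactly the shape one expects from an $\mathfrak{f}_{a,b,c}$-type object. The prefactor $(1-a)$ on the Hecke sum comes out of the Bailey pair as the residue factor at the boundary of the Bailey transform. The second identity would proceed through the same template, but with the Bailey pair specialised so that Bailey's lemma reproduces the summand $(q;q^2)_n(q)_nq^n/(aq,q/a)_n$; this specialisation naturally introduces an extra $(-q)_\infty$ in the denominator of the prefactor on the right.

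The main obstacle is the very first step: pinpointing the correct Bailey pair. The $(1-a)$ prefactor and the congruence condition on the indefinite sum are delicate indicators; an incorrect Bailey pair will either produce extraneous theta factors or fail to split cleanly into partial theta and Hecke-type pieces. Once the Bailey pair is in hand, the rest is bookkeeping: the change of variables $r=u+v,\ s=u-v$ is routine, the partial-theta tail is read off directly from the $\alpha_n$-side, and the cancellation of auxiliary theta factors against the Bailey-lemma normalisation can be verified by the elementary product rearrangements and the Jacobi-triple-product manipulations collected in Section~\ref{section:prelim}.
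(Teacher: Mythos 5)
This statement is not proved in the paper at all: it is Kim and Lovejoy's Theorem~1.1, imported verbatim from \cite{KL} and used as an input (the paper only derives the $a=1$ specialization as a corollary and then works heuristically with the associated triple-sums). So there is no in-paper argument to compare against; the relevant comparison is with the proof in \cite{KL}, which does indeed proceed by Bailey-pair techniques of the general kind you describe.

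That said, what you have written is a research plan, not a proof, and the gap is exactly the one you name yourself: you never exhibit the Bailey pair $(\alpha_n,\beta_n)$, nor the specialization of Bailey's lemma, nor the computation that produces the two pieces on the right-hand side. Every substantive claim in your sketch --- that the $\alpha_n$-side ``contains a summand proportional to $(-1)^k a^{2k+1}q^{3k(k+1)/2}$,'' that the $(1-a)$ prefactor ``comes out of the Bailey pair as the residue factor,'' that the second identity ``naturally introduces an extra $(-q)_\infty$'' --- is asserted in the conditional mood and depends on a choice you have not made. Until the pair is written down and the limit of Bailey's lemma is actually computed, none of these can be checked, and an incorrect choice would (as you note) fail to split into the partial-theta and indefinite-theta pieces at all. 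The change of variables $r=u+v$, $s=u-v$ and the product rearrangements are indeed routine, but they are the easy ten percent; the proposal leaves the other ninety percent as a search problem. To turn this into a proof you would need to either locate the specific Bailey pairs used in \cite{KL} (which arise from their partial indefinite theta framework) or construct them explicitly and verify the defining relation $\beta_n=\sum_{k=0}^{n}\alpha_k/\bigl((q;q)_{n-k}(aq;q)_{n+k}\bigr)$ together with the claimed limiting form of Bailey's lemma.
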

We collect the $a=1$ specializations in the following corollary:
\begin{corollary} \label{corollary:KL-Thm1.1-spec} We have
\begin{align}
\sum_{n = 0}^{\infty}\frac{(q;q)_{2n}q^n}{(q;q)_n^2}&=\frac{1}{J_1}\sum_{r=0}^{\infty}(-1)^rq^{3r(r+1)/2},\label{equation:KL-id-A}\\
\sum_{n = 0}^{\infty}\frac{(q;q^2)_nq^n}{(q;q)_n}&=\frac{1}{J_2}\sum_{r = 0}^{\infty}(-1)^rq^{3r^2+2r}(1+q^{2r+1}).\label{equation:KL-id-B}
\end{align}
\end{corollary}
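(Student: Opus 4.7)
The plan is to derive both identities of the corollary as the $a=1$ specializations of the Kim--Lovejoy theorem immediately preceding it. The decisive observation is that each of the two double-sums on the right-hand side of that theorem is multiplied by $(1-a)$, so those contributions vanish at $a=1$; only the single-sum piece survives, and what remains is to verify that the left-hand side and the prefactor of the single sum collapse to the forms stated in the corollary.

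For $(\ref{equation:KL-id-A})$, I would first observe that $(aq,q/a;q)_n$ becomes $(q;q)_n^2$ at $a=1$, which turns the left-hand side of the theorem into $\sum_{n\ge 0}(q;q)_{2n}q^n/(q;q)_n^2$, exactly the left-hand side of the corollary. On the right, the prefactor is $(q)_\infty/(q;q)_\infty^2 = 1/(q;q)_\infty = 1/J_1$, while $a^{2r+1}\big|_{a=1} = 1$, yielding the claimed form.

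For $(\ref{equation:KL-id-B})$, the same recipe applies: the $a=1$ specialization of $(aq,q/a;q)_n=(q;q)_n^2$, combined with one cancellation of $(q;q)_n$ from the numerator, reduces the left-hand side to $\sum_{n\ge 0}(q;q^2)_n q^n/(q;q)_n$. The only mildly nontrivial point is the prefactor $(q)_\infty/(aq,q/a,-q;q)_\infty$ at $a=1$; using the classical product identity $(q;q)_\infty(-q;q)_\infty = (q^2;q^2)_\infty = J_2$, this collapses to $1/J_2$. Together with $a^{3r+1}\big|_{a=1}=1$, this delivers the right-hand side of $(\ref{equation:KL-id-B})$.

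The only potential obstacle is the product rearrangement $(q;q)_\infty(-q;q)_\infty = J_2$, which is entirely standard. In particular, none of the deeper machinery of the paper --- the heuristic $(\ref{equation:mxqz-heuristic})$, Theorem $\ref{theorem:main-acdivb}$, or the triple-sum relation $(\ref{equation:HeckeTriple-id})$ --- is needed here; the corollary is immediate from its parent theorem.
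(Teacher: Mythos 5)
Your proposal is correct and matches the paper exactly: the paper introduces this corollary with the words ``We collect the $a=1$ specializations in the following corollary,'' i.e.\ it is obtained precisely by setting $a=1$ in the preceding Kim--Lovejoy theorem, letting the $(1-a)$ factor annihilate the double sums, and simplifying the prefactors via $(aq,q/a;q)_\infty\big|_{a=1}=(q;q)_\infty^2$ and $(q;q)_\infty(-q;q)_\infty=(q^2;q^2)_\infty$. No further machinery is used in the paper either.
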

We point out that (\ref{equation:KL-id-B}) is the $a=-1$ specialization of \cite[(5.2)]{Warn}.  In \cite{KL}, Kim and Lovejoy also show how to write \cite[Theorem $1.1$]{KL} in terms of Hecke-type triple-sums.
\begin{corollary}
 \label{corollary:KL-prop5.1-5.2}  \cite[Propositions $5.1$, $5.2$]{KL} We have
{\allowdisplaybreaks \begin{align}
\sum_{n\ge 0}\frac{(q)_{2n}q^n}{(aq,q/a)_n}&=\frac{1}{(q,aq,q/a;q)_{\infty}}\Big ( \mathfrak{g}_{1,7,1,1,1,1}(aq^2,q^3,q,q)
\\
&\ \ \ \ \ \ \ \ \ \ \ \ \ \ \ \ \ \ \ \ \ \ \ \ \ +q^4\mathfrak{g}_{1,7,1,1,1,1}(aq^6,q^7,q^2,q)\Big ),\notag\\
\sum_{n\ge 0}\frac{(q;q^2)_{n}(q)_nq^n}{(aq,q/a)_n}&=\frac{1}{(q,aq,q/a;q)_{\infty}}\Big ( \mathfrak{g}_{1,5,1,1,1,1}(aq^2,q^2,q,q)\\
&\ \ \ \ \ \ \ \ \ \ \ \ \ \ \ \ \ \ \ \ \ \ \ \ \ 
+q^3\mathfrak{g}_{1,5,1,1,1,1}(aq^5,q^5,q^2,q)\Big ).\notag 
\end{align}}%
\end{corollary}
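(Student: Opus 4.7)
The plan is to derive this corollary from the Kim-Lovejoy Theorem \cite[Theorem $1.1$]{KL} stated just above it in the excerpt. I would multiply both sides of that identity by the factor $(q,aq,q/a;q)_\infty$, which appears in the denominator on the right-hand side of the corollary, and then reorganize the result into the stated sum of two triple-sums.

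The crucial observation is that $(1-a)(aq,q/a,q;q)_\infty = j(a;q) = \sum_{m\in\mathbb{Z}}(-1)^m q^{\binom{m}{2}}a^m$ by the Jacobi triple product. Thus, after multiplication by $(q,aq,q/a;q)_\infty$, the $(1-a)$-weighted double-sum in the Kim-Lovejoy formula becomes $j(a;q)$ times the double-sum, and expanding $j(a;q)$ produces a triple-sum in an extra index $m$. The parity constraint $r\equiv s\pmod 2$ on the double-sum indices naturally splits into the two cases $r=2r'$, $s=2s'$ and $r=2r'+1$, $s=2s'+1$; after a re-indexing of $(m,r',s')$ as the triple-sum variables, these two cases should produce the two summands $\mathfrak{g}_{1,7,1,1,1,1}(aq^2,q^3,q,q)$ and $q^4\mathfrak{g}_{1,7,1,1,1,1}(aq^6,q^7,q^2,q)$ respectively. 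The proof of the second identity in the corollary, with $\mathfrak{g}_{1,5,1,1,1,1}$ and shifted arguments, proceeds entirely analogously.

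The remaining partial-theta term in the Kim-Lovejoy formula, after multiplication by $(q,aq,q/a;q)_\infty$, becomes $J_1^2\sum_{r\ge 0}(-1)^r a^{2r+1}q^{3r(r+1)/2}$, which must be absorbed into the triple-sums via the boundary identity (\ref{equation:generic-shift}). Indeed, the naive triple-sum produced by expanding $j(a;q)$ in the previous step is of the form $\sum_{r,s\ge 0}\sum_{m\in\mathbb{Z}}$, whereas $\mathfrak{g}_{a,b,c,d,e,f}$ uses the symmetric $\sum_{r,s,t\ge 0}+\sum_{r,s,t<0}$; the partial-theta term is precisely the boundary correction needed to pass from one to the other.

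The main obstacle I expect is the bookkeeping of the quadratic form. One must verify that, after the parity split and re-indexing, the combined exponent of $q$ rearranges into the quadratic form $\binom{k}{2}+7kl+\binom{l}{2}+km+lm+\binom{m}{2}$ of $\mathfrak{g}_{1,7,1,1,1,1}$, and that the linear shifts yield exactly the stated arguments $(aq^2,q^3,q,q)$ and $(aq^6,q^7,q^2,q)$. This matching, along with careful tracking of signs, is the core content of \cite[Propositions $5.1$, $5.2$]{KL}, and I would follow their line of argument rather than attempt an independent reconstruction.
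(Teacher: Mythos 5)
The paper offers no proof of this corollary: it is quoted verbatim from \cite[Propositions 5.1, 5.2]{KL}, so there is no internal argument to compare yours against, and deferring to that reference is, strictly speaking, all the paper itself does. Your opening moves are sound: multiplying \cite[Theorem 1.1]{KL} through by $(q,aq,q/a;q)_{\infty}$, using $(1-a)(aq,q/a,q;q)_{\infty}=j(a;q)$ and $(q)_{\infty}(q,aq,q/a;q)_{\infty}/(aq,q/a;q)_{\infty}=J_1^2$, correctly reduces the first identity to showing that $j(a;q)$ times the parity-constrained double sum plus $J_1^2\sum_{r\ge 0}(-1)^ra^{2r+1}q^{3r(r+1)/2}$ equals $\mathfrak{g}_{1,7,1,1,1,1}(aq^2,q^3,q,q)+q^4\mathfrak{g}_{1,7,1,1,1,1}(aq^6,q^7,q^2,q)$; and the split of $r\equiv s\pmod 2$ into even--even and odd--odd pairs is indeed the mechanism that produces the two $\mathfrak{g}$-summands (compare the proof of Proposition \ref{proposition:symmetry-shift}, which runs exactly this dissection at the double-sum level).

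The gap is in your final step. You assert that the partial theta term is ``precisely the boundary correction'' needed to pass from the sum $\sum_{r,s\ge 0}\sum_{m\in\mathbb{Z}}$ obtained by expanding $j(a;q)$ to the symmetric $\big(\sum_{r,s,t\ge 0}+\sum_{r,s,t<0}\big)$ convention of $\mathfrak{g}$. That assertion is the entire content of the identity and is nowhere justified: the exact boundary relation (\ref{equation:generic-shift}), of which (\ref{equation:HeckeTriple-id}) is only the ``mod theta'' shadow, produces several correction terms --- $\sg$-weighted double sums as well as full theta sums --- and one must verify that in this particular specialization they collapse to the single partial theta $J_1^2\sum_{r\ge 0}(-1)^ra^{2r+1}q^{3r(r+1)/2}$. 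Nothing in your sketch carries this out, and you explicitly defer the ``core content'' to \cite{KL}, so what you have is a citation with a heuristic gloss rather than a proof. Note also that your derivation runs from \cite[Theorem 1.1]{KL} to the triple sums, which inverts the direction used in Section \ref{section:KimLovejoy}, where the triple-sum forms are the starting point and the false theta evaluations are recovered from them; this is legitimate only if \cite{KL} establish Theorem 1.1 independently of Propositions 5.1 and 5.2 (as they do, via Bailey pairs), a point you would need to confirm to rule out circularity.
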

In this section, we will use the triple-sum relation (\ref{equation:HeckeTriple-id}) and the Hecke-type double-sum expansions of \cite{HM} in order to guide us from the $a=1$ specialization of Corollary \ref{corollary:KL-prop5.1-5.2} to the false theta functions of Corollary \ref{corollary:KL-Thm1.1-spec}.

\subsection{On the triple-sum for the false theta function Identity (\ref{equation:KL-id-A})} 
We apply our heuristic methods to
\begin{equation*}
\mathfrak{g}_{1,7,1,1,1,1}(q^2,q^3,q,q)+q^4\mathfrak{g}_{1,7,1,1,1,1}(q^6,q^7,q^2,q).
\end{equation*}
We consider the contributions of $\mathfrak{g}_{1,7,1,1,1,1}(q^2,q^3,q,q)$ to the top row of (\ref{equation:generic-shift}).  We have
\begin{align*}
\sum_{r=0}^{R-1}\sum_{\sg(s)=\sg(t)}\sg(s) c_{r,s,t}
&=\sum_{r=0}^{R-1}(-1)^{r}q^{2r}q^{\binom{r}{2}}\sum_{s,t}\sg(s,t)(-1)^{s+t}q^{3s}q^tq^{7rs+\binom{s}{2}+rt+st+\binom{t}{2}}\\
&=\sum_{r= 0}^{R-1}(-1)^rq^{2r}q^{\binom{r}{2}}\mathfrak{f}_{1,1,1}(q^{3+7r},q^{1+r},q)\\
&= 0,
\end{align*}
where for the last equality we used Proposition \ref{proposition:f111-zero}.  Similarly, we have
\begin{equation*}
\sum_{s=0}^{S-1}\sum_{\sg(r)=\sg(t)}\sg(r) c_{r,s,t}
=\sum_{s= 0}^{S-1}(-1)^sq^{3s}q^{\binom{s}{2}}\mathfrak{f}_{1,1,1}(q^{2+7s},q^{1+s},q)=0.
\end{equation*}
For the final piece in the top row of (\ref{equation:generic-shift}), we have
{\allowdisplaybreaks \begin{align*}
\sum_{t=0}^{T-1}\sum_{\sg(r)=\sg(s)}\sg(r) c_{r,s,t} 
&= \sum_{t= 0}^{T-1}(-1)^tq^tq^{\binom{t}{2}}\mathfrak{f}_{1,7,1}(q^{2+t},q^{3+t},q).
\end{align*}}%

We consider the bottom row of (\ref{equation:generic-shift}).  We have
{\allowdisplaybreaks \begin{align*}
\sum_{r=0}^{R-1}\sum_{s= 0}^{S-1}\sum_{t\in\mathbb{Z}}c_{r,s,t}
&= \sum_{r=0}^{R-1}\sum_{s= 0}^{S-1}(-1)^{r+s}q^{2r}q^{3s}q^{\binom{r}{2}+7rs+\binom{s}{2}}\sum_{t\in\mathbb{Z}}(-1)^{t}q^{(1+r+s)t}q^{\binom{t}{2}}\\
&= \sum_{r=0}^{R-1}\sum_{s= 0}^{S-1}(-1)^{r+s}q^{2r}q^{3s}q^{\binom{r}{2}+7rs+\binom{s}{2}}j(q^{1+r+s};q)\\
&= 0.
\end{align*}}%
Similarly, we have
 \begin{align*}
\sum_{r=0}^{R-1}\sum_{t=0}^{T-1}\sum_{t\in\mathbb{Z}}c_{r,s,t}
&= \sum_{r=0}^{R-1}\sum_{t=0}^{T-1}(-1)^{r+t}q^{2r}q^{t}q^{\binom{r}{2}+rt+\binom{t}{2}}j(q^{3+t+7r};q)
= 0,\\
\sum_{s= 0}^{S-1}\sum_{t= 0}^{T-1}\sum_{r\in\mathbb{Z}}c_{r,s,t}
&= \sum_{s= 0}^{S-1}\sum_{t= 0}^{T-1}(-1)^{s+t}q^{3s}q^tq^{\binom{s}{2}+st+\binom{r}{2}}j(q^{2+7s+t};q)
=0.
\end{align*}

We proceed to work on the second sum $\mathfrak{g}_{1,7,1,1,1,1}(q^6,q^7,q^2,q)$.  We consider the contributions to the top row of (\ref{equation:generic-shift}).  Arguing as above we have

{\allowdisplaybreaks \begin{align*}
\sum_{r=0}^{R-1}\sum_{\sg(s)=\sg(t)}\sg(s) c_{r,s,t}
&=\sum_{r=0}^{R-1}(-1)^rq^{6r}q^{\binom{r}{2}}\mathfrak{f}_{1,1,1}(q^{7+7r},q^{2+r},q)
= 0,\\
\sum_{s=0}^{S-1}\sum_{\sg(r)=\sg(t)}\sg(r) c_{r,s,t}
&=\sum_{s=0}^{S-1}(-1)^sq^{7s}q^{\binom{s}{2}}\mathfrak{f}_{1,1,1}(q^{6+7s},q^{2+s},q)
= 0,\\
\sum_{t=0}^{T-1}\sum_{\sg(r)=\sg(s)}\sg(r) c_{r,s,t}
&=\sum_{t= 0}^{T-1}(-1)^tq^{2t}q^{\binom{t}{2}}\mathfrak{f}_{1,7,1}(q^{6+t},q^{7+t},q).
\end{align*}}%
For the bottom row, we have
{\allowdisplaybreaks \begin{align*}
\sum_{r=0}^{R-1}\sum_{s=0}^{S-1}\sum_{t\in\mathbb{Z}}c_{r,s,t}
&= \sum_{r=0}^{R-1}\sum_{s=0}^{S-1}(-1)^{r+s}q^{6r}q^{7s}q^{\binom{r}{2}+7rs+\binom{s}{2}}j(q^{2+r+s};q)
= 0,\\
\sum_{r=0}^{R-1}\sum_{t=0}^{T-1}\sum_{t\in\mathbb{Z}}c_{r,s,t}
&= \sum_{r=0}^{R-1}\sum_{t=0}^{T-1}(-1)^{r+t}q^{6r}q^{2t}q^{\binom{r}{2}+rt+\binom{t}{2}}j(q^{6+7r+t};q)
= 0,\\
\sum_{s= 0}^{S-1}\sum_{t= 0}^{T-1}\sum_{r\in\mathbb{Z}}c_{r,s,t}
&= \sum_{s= 0}^{S-1}\sum_{t= 0}^{T-1}(-1)^{s+t}q^{7s}q^{2t}q^{\binom{s}{2}+st+\binom{t}{2}}j(q^{6+7s+t};q)
= 0.
\end{align*}}%

Combining terms and letting $R,S,T\rightarrow \infty$ as in (\ref{equation:HeckeTriple-id}), gives
\begin{align*}
\sum_{n\ge 0}\frac{(q)_{2n}q^n}{(q)_n^2}
& \sim \frac{1}{J_1^3}\Big [ \sum_{t=0}^{\infty}(-1)^tq^{\binom{t+1}{2}}
\Big ( \mathfrak{f}_{1,7,1}(q^{2+t},q^{3+t},q)+q^{4+t}\mathfrak{f}_{1,7,1}(q^{6+t},q^{7+t},q)\Big )\Big ].
\end{align*}
Using Identity (\ref{equation:f171-f441}) gives
\begin{equation*}
\sum_{n\ge 0}\frac{(q)_{2n}q^n}{(q)_n^2}\sim \frac{1}{J_1^3}\Big [ \sum_{m=0}^{\infty}(-1)^mq^{\binom{m+1}{2}}
\Big ( \mathfrak{f}_{4,4,1}(-q^{4+m},q^{2+m},q)-q^{-m}\mathfrak{f}_{4,4,1}(-q^{4-m},q^{2-m},q)\Big )\Big ].
\end{equation*}
Making the substitution $m \rightarrow 4m+k$, $k\in\{0,1,2,3\}$, and using Proposition \ref{proposition:f441-reduce} yields
{\allowdisplaybreaks \begin{align*}
\sum_{n\ge 0}&\frac{(q)_{2n}q^n}{(q)_n^2}\\
&\sim \frac{1}{J_1^3}\sum_{k=0}^3\Big ( \mathfrak{f}_{4,4,1}(-q^{4+k},q^{2+k},q)
-q^{-k}\mathfrak{f}_{4,4,1}(-q^{4-k},q^{2-k},q) \Big ) \\
&\ \ \ \ \ \cdot \Big [ \sum_{m=0}^{\infty}(-1)^{4m+k}q^{\binom{4m+k+1}{2}}q^{-2m^2-2m-mk}\Big ]\\
&\sim \frac{1}{J_1^3}\sum_{k=0}^3\Big [ (-1)^kq^{\binom{k+1}{2}}\Big ( \mathfrak{f}_{4,4,1}(-q^{4+k},q^{2+k},q)
-q^{-k}\mathfrak{f}_{4,4,1}(-q^{4-k},q^{2-k},q) \Big )
\cdot   \sum_{m=0}^{\infty}q^{6m^2+3mk}\Big ].
\end{align*}}%
We recall the evaluations found in (\ref{equation:f441-evaluations}).  If we define
\begin{align*}
a_k:=\mathfrak{f}_{4,4,1}(-q^{4+k},q^{2+k},q)-q^{-k}\mathfrak{f}_{4,4,1}(-q^{4-k},q^{2-k},q),
\end{align*}
then $a_0=0$, $a_1=-q^{-1}J_1^2$, $a_2=0$, and $a_3=q^{-3}J_1^2$.  Finally, we arrive at Identity (\ref{equation:KL-id-A}):
{\allowdisplaybreaks \begin{align*}
\sum_{n\ge 0}\frac{(q)_{2n}q^n}{(q)_n^2}&\sim \frac{1}{J_1^3}\Big [ 0-q\cdot (-q^{-1}J_1^2)\sum_{m=0}^{\infty}q^{6m^2+3m}+0-q^6(q^{-3}J_1^2)\sum_{m=0}^{\infty}q^{6m^2+9m}\Big ]\\
&\sim \frac{1}{J_1}\Big [ \sum_{m=0}^{\infty}q^{6m^2+3m}-q^3\sum_{m=0}^{\infty}q^{6m^2+9m}\Big ]\\
&\sim \frac{1}{J_1}\sum_{m=0}(-1)^mq^{3m(m+1)/2}.
\end{align*}}%

\subsection{On the triple-sum for the false theta function Identity (\ref{equation:KL-id-B})}
We recall our triple-sum
\begin{align*}
\mathfrak{g}_{1,5,1,1,1,1}(q^2,q^2,q,q)&+q^3\mathfrak{g}_{1,5,1,1,1,1}(q^5,q^5,q^2,q).
\end{align*}

We consider the contributions of $\mathfrak{g}_{1,5,1,1,1,1}(q^2,q^2,q,q)$ to the top row of (\ref{equation:generic-shift}). Arguing as in the previous subsection, we have
{\allowdisplaybreaks \begin{align*}
\sum_{r=0}^{R-1}\sum_{\sg(s)=\sg(t)}\sg(s)c_{r,s,t}
&= \sum_{r=0}^{R-1}(-1)^rq^{2r}q^{\binom{r}{2}}\mathfrak{f}_{1,1,1}(q^{2+5r},q^{1+r},q)
= 0,\\
\sum_{s=0}^{S-1}\sum_{\sg(r)=\sg(t)}\sg(r)c_{r,s,t}
&=\sum_{s=0}^{S-1}(-1)^sq^{2s}q^{\binom{s}{2}}\mathfrak{f}_{1,1,1}(q^{2+5s},q^{1+s},q)
=0,\\
\sum_{t=0}^{T-1}\sum_{\sg(r)=\sg(s)}\sg(r)c_{r,s,t}
&= \sum_{t=0}^{T-1}(-1)^tq^tq^{\binom{t}{2}}\mathfrak{f}_{1,5,1}(q^{2+t},q^{2+t},q).
\end{align*}}%

For the bottom row, we have
{\allowdisplaybreaks \begin{align*}
 \sum_{r=0}^{R-1}\sum_{t=0}^{T-1}\sum_{s\in \mathbb{Z}}c_{r,s,t}
&= \sum_{r=0}^{R-1}\sum_{t=0}^{T-1}(-1)^{r+t}q^{2r}q^tq^{\binom{r}{2}+rt+\binom{t}{2}}j(q^{2+5r+t};q)
= 0,\\
\sum_{s=0}^{S-1}\sum_{t=0}^{T-1}\sum_{r\in \mathbb{Z}}c_{r,s,t} 
&= \sum_{s=0}^{S-1}\sum_{t=0}^{T-1}(-1)^{st}q^{2s}q^{t}q^{\binom{s}{2}+st+\binom{t}{2}}j(q^{2+5s+t};q)
=0,\\
\sum_{r=0}^{R-1}\sum_{s=0}^{S-1}\sum_{t\in \mathbb{Z}}c_{r,s,t}
&= \sum_{r=0}^{R-1}\sum_{s=0}^{S-1}(-1)^{r+s}q^{2r}q^{2s}q^{\binom{r}{2}+5rs+\binom{s}{2}}j(q^{1+r+s};q)
=0.
\end{align*}}%

We consider the contributions of $\mathfrak{g}_{1,5,1,1,1,1}(q^5,q^5,q^2,q)$ to the top row of (\ref{equation:generic-shift}).  Computing the summands yields
{\allowdisplaybreaks \begin{align*}
 \sum_{r=0}^{R-1}\sum_{\sg(s)=\sg(t)}\sg(s) c_{r,s,t}
&= \sum_{r=0}^{R-1}(-1)^rq^{5r}q^{\binom{r}{2}}\mathfrak{f}_{1,1,1}(q^{5+5r},q^{2+r},q)
= 0,\\
 \sum_{s=0}^{S-1}\sum_{\sg(r)=\sg(t)}\sg(r) c_{r,s,t}
&= \sum_{s=0}^{S-1}(-1)^sq^{5s}q^{\binom{s}{2}}\mathfrak{f}_{1,1,1}(q^{5+5s},q^{2+s},q)
=0,\\
 \sum_{t=0}^{T-1}\sum_{\sg(r)=\sg(s)}\sg(r) c_{r,s,t} 
&=\sum_{t=0}^{T-1}(-1)^tq^{2t}q^{\binom{t}{2}}\mathfrak{f}_{1,5,1}(q^{5+t},q^{5+t},q).
\end{align*}}%
For the bottom row, we have as before
{\allowdisplaybreaks \begin{align*}
\sum_{r=0}^{R-1}\sum_{s=0}^{S-1}\sum_{t\in \mathbb{Z}}c_{r,s,t}
&= \sum_{r=0}^{R-1}\sum_{s=0}^{S-1}(-1)^{r+s}q^{5r}q^{5s}q^{\binom{r}{2}+5rs+\binom{s}{2}}j(q^{2+r+s};q)
= 0,\\
 \sum_{r=0}^{R-1}\sum_{t=0}^{T-1}\sum_{s\in \mathbb{Z}}c_{r,s,t}
& = \sum_{r=0}^{R-1}\sum_{t=0}^{T-1}(-1)^{r+t}q^{5r}q^{2t}q^{\binom{r}{2}+rt+\binom{t}{2}}j(q^{5+5r+t};q)
 =0,\\
\sum_{s=0}^{S-1}\sum_{t=0}^{T-1}\sum_{r\in \mathbb{Z}}c_{r,s,t} 
&= \sum_{s=0}^{S-1}\sum_{t=0}^{T-1}(-1)^{s+t}q^{5s}q^{2t}q^{\binom{s}{2}+st+\binom{t}{2}}j(q^{5+5s+t};q)
=0.
\end{align*}}%

Combining terms and letting $R,S,T\rightarrow \infty$ as in (\ref{equation:HeckeTriple-id}), gives
\begin{align*}
\sum_{n\ge 0}\frac{(q;q^2)_{n}q^n}{(q)_n}
& \sim \frac{1}{J_1^3}\Big [ \sum_{t=0}^{\infty}(-1)^tq^{\binom{t+1}{2}}
\Big ( \mathfrak{f}_{1,5,1}(q^{2+t},q^{2+t},q)+q^{3+t}\mathfrak{f}_{1,5,1}(q^{5+t},q^{5+t},q)\Big )\Big ].
\end{align*}
Using Identity (\ref{equation:f151-f331}) gives
\begin{align*}
\sum_{n\ge 0}&\frac{(q)_{2n}q^n}{(q)_n^2}\sim \frac{1}{J_1^3}\Big [ \sum_{m=0}^{\infty}(-1)^mq^{\binom{m+1}{2}}
\Big ( \mathfrak{f}_{3,3,1}(-q^{3+m},q^{2+m},q)-q^{-m}\mathfrak{f}_{3,3,1}(-q^{3-m},q^{2-m},q)\Big )\Big ].
\end{align*}
Making the substitution $m\rightarrow 3m+k$, $k\in\{0,1,2\}$, and using Proposition \ref{proposition:f331-reduce} yields
{\allowdisplaybreaks \begin{align*}
\sum_{n=0}^{\infty}\frac{(q;q^2)_nq^n}{(q)_n}
%
%
&\sim \frac{1}{J_1^3}\sum_{k=0}^{2}  \Big (  f_{3,3,1}(-q^{3+k},q^{2+k},q)
-q^{-k}f_{3,3,1}(-q^{3-k},q^{2-k},q)\Big )\\
& \ \ \ \ \ \cdot \Big [\sum_{m=0}^{\infty}(-1)^{3m+k}q^{\binom{3m+k+1}{2}}  q^{-\frac{3}{2}m^2-\frac{3}{2}m-mk} \Big ]\\
&\sim \frac{1}{J_1^3}\sum_{k=0}^{2}  \Big [ (-1)^kq^{\binom{k+1}{2}}\Big (  f_{3,3,1}(-q^{3+k},q^{2+k},q)
-q^{-k}f_{3,3,1}(-q^{3-k},q^{2-k},q)\Big ) \\
&\ \ \ \ \ \ \ \ \ \ \cdot \sum_{m=0}^{\infty}(-1)^{m}q^{3m^2+2mk} \Big ].
\end{align*}}%
We recall the evaluations found in (\ref{equation:f331-evaluations}).  If we define
{\allowdisplaybreaks \begin{align*}
b_k:= \mathfrak{f}_{3,3,1}(-q^{3+k},q^{2+k},q)-q^{-k}\mathfrak{f}_{3,3,1}(-q^{3-k},q^{2-k},q),
\end{align*}
then $b_0=0$, $b_1=-q^{-1}{J_1^3}/{J_2}$, and $b_2=q^{-2}{J_1^3}/{J_2}$.  Finally, we arrive at Identity (\ref{equation:KL-id-B}):
{\allowdisplaybreaks \begin{align*}
\sum_{n=0}^{\infty}&\frac{(q;q^2)_nq^n}{(q)_n}\\
&\sim \frac{1}{J_1^3}\Big [  0-q(-q^{-1}J_1^3/J_2)\sum_{m=0}^{\infty}(-1)^{m}q^{3m^2+2m}+q^3(q^{-2}J_1^3/J_2)\sum_{m=0}^{\infty}(-1)^{m}q^{3m^2+4m}\Big ]\\
&\sim \frac{1}{J_2} \sum_{m=0}^{\infty}(-1)^{m}q^{3m^2+2m}(1+q^{2m+1}).
\end{align*}}%

\section{Proof of Corollary \ref{corollary:newIds}}\label{section:exploration}
We present some corollaries to identities (\ref{equation:chi0-triple}) and (\ref{equation:chi1-triple}).  Identity (\ref{equation:newId-1}) follows from (\ref{equation:triple-flip}).  For Identity (\ref{equation:newId-2}), we employ (\ref{equation:g-shift}) with $(R,S,T)=(0,0,1)$ to obtain
\begin{align*}
\mathfrak{g}_{1,2,1,2,2,1}(q,q,q^2,q)&=-q^2\mathfrak{g}_{1,2,1,2,2,1}(q^3,q^3,q^3,q)+\mathfrak{f}_{1,2,1}(q,q,q)
=\mathfrak{f}_{1,2,1}(q,q,q)
=J_{1}^2,
\end{align*}
where we used (\ref{equation:newId-1}) for the second equality and (\ref{equation:f121}) for the third equality.   For Identity (\ref{equation:newId-3}), we use (\ref{equation:g-shift}) with $(R,S,T)=(0,1,1)$ to have
\begin{align*}
\mathfrak{g}_{1,2,1,2,2,1}(q,q^2,q^2,q)
&=q^{6}\mathfrak{g}_{1,2,1,2,2,1}(q^5,q^5,q^5,q)+2\mathfrak{f}_{1,2,1}(q,q^2,q)\\
&=-\mathfrak{g}_{1,2,1,2,2,1}(q,q,q,q)+2\mathfrak{f}_{1,2,1}(q,q^2,q)\\
&=-\mathfrak{g}_{1,2,1,2,2,1}(q,q,q,q)+2J_{1}^2,
\end{align*}
where the second equality follows from (\ref{equation:triple-flip}) and the third equality from (\ref{equation:f121}).  Comparing the last equality with (\ref{equation:chi0-triple}) gives the result.  for Identity (\ref{equation:newId2-1}), we again use (\ref{equation:g-shift}) but with $(R,S,T)=(0,0,2)$ to obtain
\begin{align*}
\mathfrak{g}_{1,2,1,2,2,1}(q,q,q^3,q)
&=q^{7}\mathfrak{g}_{1,2,1,2,2,1}(q^5,q^5,q^5,q)+\mathfrak{f}_{1,2,1}(q,q,q)-q^3\mathfrak{f}_{1,2,1}(q^3,q^3,q)\\
&=-q\mathfrak{g}_{1,2,1,2,2,1}(q,q,q,q)+\mathfrak{f}_{1,2,1}(q,q,q)-q^3\mathfrak{f}_{1,2,1}(q^3,q^3,q)\\
&=-q\mathfrak{g}_{1,2,1,2,2,1}(q,q,q,q)+\mathfrak{f}_{1,2,1}(q,q,q)+q\mathfrak{f}_{1,2,1}(q,q,q)\\
&=qJ_1^2(\chi_0(q)-2)+J_1^2+qJ_1^2\\
&=qJ_1^2\chi_0(q)+J_1^2-qJ_1^2,
\end{align*}
where the second equality follows from (\ref{equation:triple-flip}), the third equality from (\ref{equation:fabc-flip}), the fourth equality from (\ref{equation:f121}), and the last equality from  (\ref{equation:chi0-triple}).  For the final Identity (\ref{equation:newId2-2}) we use (\ref{equation:g-shift}) with $(R,S,T)=(-1,0,1)$ to have
\begin{align*}
\mathfrak{g}_{1,2,1,2,2,1}&(q,q^2,q^3,q)\\
&=q\mathfrak{g}_{1,2,1,2,2,1}(q^2,q^2,q^2,q)-q^{-1}\mathfrak{f}_{1,2,1}(1,q,q)+\mathfrak{f}_{1,2,1}(q,q,q)
+q^{-1}j(1;q).
\end{align*}
Using (\ref{equation:f121}) twice gives
\begin{equation*}
\mathfrak{g}_{1,2,1,2,2,1}(q,q^2,q^3,q)
=q\mathfrak{g}_{1,2,1,2,2,1}(q^2,q^2,q^2,q)+J_1^2
=qJ_1^2\chi_1(q)+J_1^2,
\end{equation*}
where the last equality followed from (\ref{equation:chi1-triple}) and (\ref{equation:f121}).


\section{Obtaining Identity (\ref{equation:g131331-conjecture})}\label{section:conjecture}
We consider
\begin{equation}
\mathfrak{g}_{1,3,1,3,3,1}(q,q,q,q).
\end{equation}
We recall the specialization $(n,p)=(1,2)$ of \cite[Proposition $8.1$]{HM} where $\ell=1$:
\begin{align}
\mathfrak{f}_{1,3,1}(x,y,q)&=j(y;q)m\Big( -\frac{q^5x}{y^3},q^8,\frac{q^2y}{x}\Big  )
+j(x;q)m\Big( -\frac{q^5y}{x^3},q^8,\frac{x}{q^2y}\Big  )\\
&\ \ \ \ \ +\frac{q^{5}x^2yJ_{2,4}J_{8,16}j(q^7xy;q^8)j(q^{22}x^2y^2;q^{16})}{j(-q^{5}x^2;q^8)j(-q^{9}y^2;q^8)}.\notag
\end{align}
Following the reasoning in Section \ref{section:Zwegers}, we only need to consider terms of the form
\begin{equation}
\sum_{m\ge0}(-1)^{m}q^{\binom{m}{2}}z^m\mathfrak{f}_{1,3,1}(q^{3m}x,q^{3m}y,q).\label{equation:f131-sum}
\end{equation}
Because the theta coefficients of the two Appell--Lerch terms vanish and because the Appell--Lerch terms are defined, we can write the above as
\begin{equation}
\mathfrak{f}_{1,3,1}(q^{3m}x,q^{3m}y,q)
=
\frac{q^{5+9m}x^2yJ_{2,4}J_{8,16}j(q^{7+6m}xy;q^8)j(q^{22+12m}x^2y^2;q^{16})}
{j(-q^{5+6m}x^2;q^8)j(-q^{9+6m}y^2;q^8)}.
\end{equation}

We will need to consider $m\rightarrow 4m+a$, $a\in \{0,1,2,3\}$.  In general, (\ref{equation:f131-sum}) reads
\begin{align*}
\sum_{m\ge0}&(-1)^{a}q^{\binom{4m+a}{2}}z^{4m+a}
\frac{q^{5+36m+9a}x^2yJ_{2,4}J_{8,16}j(q^{7+24m+6a}xy;q^8)j(q^{22+48m+12a}x^2y^2;q^{16})}
{j(-q^{5+24m+6a}x^2;q^8)j(-q^{9+24m+6a}y^2;q^8)}.
\end{align*}
For the case $a=0$
\begin{align*}
\sum_{m\ge0}&q^{\binom{4m}{2}}z^{4m}
\frac{q^{5+36m}x^2yJ_{2,4}J_{8,16}j(q^{7+24m}xy;q^8)j(q^{22+48m}x^2y^2;q^{16})}
{j(-q^{5+24m}x^2;q^8)j(-q^{9+24m}y^2;q^8)}\\
&\sim
\sum_{m\ge0}q^{\binom{4m}{2}}z^{4m}
\frac{q^{5+36m}x^2yJ_{2,4}J_{8,16}j(q^{8\cdot 3m}q^{7}xy;q^8)j(q^{16\cdot (3m+1)}q^{6}x^2y^2;q^{16})}
{j(-q^{8\cdot 3m}q^{5}x^2;q^8)j(-q^{8\cdot (3m+1)}qy^2;q^8)}\\
&\sim
\sum_{m\ge0}q^{\binom{4m}{2}}z^{4m}
\frac{q^{5+36m}x^2yJ_{2,4}J_{8,16}j(q^{7}xy;q^8)j(q^{6}x^2y^2;q^{16})}
{j(-q^5x^2;q^8)j(-qy^2;q^8)}\\
&\ \ \ \ \ 
\cdot\frac{(-1)^{3m}q^{-8\binom{3m}{2}}(q^7xy)^{-3m}(-1)^{3m+1}q^{-16\binom{3m+1}{2}}(q^6x^2y^2)^{-(3m+1)}}
{q^{-8\binom{3m}{2}}(q^5x^2)^{-3m}   q^{-8\binom{3m+1}{2}}(qy^2)^{-(3m+1)} }\\
&\sim
-y\cdot \frac{J_{2,4}J_{8,16}j(q^{7}xy;q^8)j(q^{6}x^2y^2;q^{16})}{j(-q^5x^2;q^8)j(-qy^2;q^8)}
\sum_{m\ge0}\Big ( \frac{z^4}{x^3y^3}\Big )^{m}q^{-28m^2+m} \\
&\sim
-y\cdot \frac{J_{2,4}J_{8,16}j(q^{7}xy;q^8)j(q^{6}x^2y^2;q^{16})}{j(-q^5x^2;q^8)j(-qy^2;q^8)}
\sum_{m\ge0}(-1)^m\Big ( -\frac{q^{29}z^4}{x^3y^3}\Big )^{m}q^{-56\binom{m+1}{2}} \\
&\sim
-y\cdot \frac{J_{2,4}J_{8,16}j(q^{7}xy;q^8)j(q^{6}x^2y^2;q^{16})}{j(-q^5x^2;q^8)j(-qy^2;q^8)}
m\Big ( -\frac{q^{29}z^4}{x^3y^3},q^{56} , * \Big ). 
\end{align*}

For the case $a=1$
\begin{align*}
-\sum_{m\ge0}&q^{\binom{4m+1}{2}}z^{4m+1}
\frac{q^{14+36m}x^2yJ_{2,4}J_{8,16}j(q^{13+24m}xy;q^8)j(q^{34+48m}x^2y^2;q^{16})}
{j(-q^{11+24m}x^2;q^8)j(-q^{15+24m}y^2;q^8)}\\
&\sim
-\sum_{m\ge0}q^{\binom{4m+1}{2}}z^{4m+1}
\frac{q^{14+36m}x^2yJ_{2,4}J_{8,16}j(q^{8\cdot (3m+1)}q^{5}xy;q^8)j(q^{16\cdot (3m+2)}q^{2}x^2y^2;q^{16})}
{j(-q^{8\cdot (3m+1)}q^{3}x^2;q^8)j(-q^{8\cdot (3m+1)}q^7y^2;q^8)}\\
&\sim
-\sum_{m\ge0}q^{\binom{4m+1}{2}}z^{4m+1}
\frac{q^{14+36m}x^2yJ_{2,4}J_{8,16}j(q^{5}xy;q^8)j(q^{2}x^2y^2;q^{16})}
{j(-q^3x^2;q^8)j(-q^7y^2;q^8)}\\
&\ \ \ \ \ 
\cdot\frac{(-1)^{3m+1}q^{-8\binom{3m+1}{2}}(q^5xy)^{-(3m+1)}(-1)^{3m+2}
q^{-16\binom{3m+2}{2}}(q^2x^2y^2)^{-(3m+2)}}
{q^{-8\binom{3m+1}{2}}(q^3x^2)^{-(3m+1)}   q^{-8\binom{3m+1}{2}}(q^7y^2)^{-(3m+1)} }\\
&\sim
\frac{z}{xy^2}\cdot \frac{J_{2,4}J_{8,16}j(q^{5}xy;q^8)j(q^{2}x^2y^2;q^{16})}{j(-q^3x^2;q^8)j(-q^7y^2;q^8)}
\sum_{m\ge0}\Big ( \frac{z^4}{x^3y^3}\Big )^{m}q^{-28m^2-13m-1} \\
&\sim
\frac{z}{xy^2q}\cdot \frac{J_{2,4}J_{8,16}j(q^{5}xy;q^8)j(q^{2}x^2y^2;q^{16})}{j(-q^3x^2;q^8)j(-q^7y^2;q^8)}
\sum_{m\ge0}(-1)^m\Big ( -\frac{q^{15}z^4}{x^3y^3}\Big )^{m}q^{-56\binom{m+1}{2}} \\
&\sim
\frac{z}{xy^2q}\cdot \frac{J_{2,4}J_{8,16}j(q^{5}xy;q^8)j(q^{2}x^2y^2;q^{16})}{j(-q^3x^2;q^8)j(-q^7y^2;q^8)}
m\Big ( -\frac{q^{15}z^4}{x^3y^3},q^{56} , * \Big ). 
\end{align*}

For the case $a=2$
\begin{align*}
\sum_{m\ge0}&q^{\binom{4m+2}{2}}z^{4m+2}
\frac{q^{23+36m}x^2yJ_{2,4}J_{8,16}j(q^{19+24m}xy;q^8)j(q^{46+48m}x^2y^2;q^{16})}
{j(-q^{17+24m}x^2;q^8)j(-q^{21+24m}y^2;q^8)}\\
&\sim
\sum_{m\ge0}q^{\binom{4m+2}{2}}z^{4m+2}
\frac{q^{23+36m}x^2yJ_{2,4}J_{8,16}j(q^{8\cdot (3m+2)}q^{3}xy;q^8)j(q^{16\cdot (3m+2)}q^{14}x^2y^2;q^{16})}
{j(-q^{8\cdot (3m+2)}qx^2;q^8)j(-q^{8\cdot (3m+2)}q^5y^2;q^8)}\\
&\sim
\sum_{m\ge0}q^{\binom{4m+2}{2}}z^{4m+2}
\frac{q^{23+36m}x^2yJ_{2,4}J_{8,16}j(q^{3}xy;q^8)j(q^{14}x^2y^2;q^{16})}
{j(-qx^2;q^8)j(-q^5y^2;q^8)}\\
&\ \ \ \ \ 
\cdot\frac{(-1)^{3m+2}q^{-8\binom{3m+2}{2}}(q^3xy)^{-(3m+2)}(-1)^{3m+2}
q^{-16\binom{3m+2}{2}}(q^{14}x^2y^2)^{-(3m+2)}}
{q^{-8\binom{3m+2}{2}}(qx^2)^{-(3m+2)}   q^{-8\binom{3m+2}{2}}(q^5y^2)^{-(3m+2)} }\\
&\sim
\frac{z^2}{y}\cdot \frac{J_{2,4}J_{8,16}j(q^{3}xy;q^8)j(q^{14}x^2y^2;q^{16})}{j(-qx^2;q^8)j(-q^5y^2;q^8)}
\sum_{m\ge0}\Big ( \frac{z^4}{x^3y^3}\Big )^{m}q^{-28m^2-27m-6} \\
&\sim
\frac{z^2}{yq^6}\cdot \frac{J_{2,4}J_{8,16}j(q^{3}xy;q^8)j(q^{14}x^2y^2;q^{16})}{j(-qx^2;q^8)j(-q^5y^2;q^8)}
\sum_{m\ge0}(-1)^{m}\Big ( -\frac{qz^4}{x^3y^3}\Big )^{m}q^{-56\binom{m+1}{2}} \\
&\sim
\frac{z^2}{yq^6}\cdot \frac{J_{2,4}J_{8,16}j(q^3xy;q^8)j(q^{14}x^2y^2;q^{16})}{j(-qx^2;q^8)j(-q^5y^2;q^8)}
m\Big ( -\frac{qz^4}{x^3y^3},q^{56} , * \Big ). 
\end{align*}

For the case $a=3$
\begin{align*}
-\sum_{m\ge0}&q^{\binom{4m+3}{2}}z^{4m+3}
\frac{q^{32+36m}x^2yJ_{2,4}J_{8,16}j(q^{25+24m}xy;q^8)j(q^{58+48m}x^2y^2;q^{16})}
{j(-q^{23+24m}x^2;q^8)j(-q^{27+24m}y^2;q^8)}\\
&\sim
-\sum_{m\ge0}q^{\binom{4m+3}{2}}z^{4m+3}
\frac{q^{32+36m}x^2yJ_{2,4}J_{8,16}j(q^{8\cdot (3m+3)}qxy;q^8)j(q^{16\cdot (3m+3)}q^{10}x^2y^2;q^{16})}
{j(-q^{8\cdot (3m+2)}q^7x^2;q^8)j(-q^{8\cdot (3m+3)}q^3y^2;q^8)}\\
&\sim
-\sum_{m\ge0}q^{\binom{4m+3}{2}}z^{4m+3}
\frac{q^{32+36m}x^2yJ_{2,4}J_{8,16}j(qxy;q^8)j(q^{10}x^2y^2;q^{16})}
{j(-q^7x^2;q^8)j(-q^3y^2;q^8)}\\
&\ \ \ \ \ 
\cdot\frac{(-1)^{3m+3}q^{-8\binom{3m+3}{2}}(qxy)^{-(3m+3)}(-1)^{3m+3}
q^{-16\binom{3m+3}{2}}(q^{10}x^2y^2)^{-(3m+3)}}
{q^{-8\binom{3m+2}{2}}(q^7x^2)^{-(3m+2)}   q^{-8\binom{3m+3}{2}}(q^{3}y^2)^{-(3m+3)} }\\
&\sim
-\frac{z^3}{x^3y^2}\cdot \frac{J_{2,4}J_{8,16}j(qxy;q^8)j(q^{10}x^2y^2;q^{16})}{j(-q^7x^2;q^8)j(-q^3y^2;q^8)}
\sum_{m\ge0}\Big ( \frac{z^4}{x^3y^3}\Big )^{m}q^{-28m^2-41m-15} \\
&\sim
-\frac{z^3}{x^3y^2q^{15}}\cdot  \frac{J_{2,4}J_{8,16}j(qxy;q^8)j(q^{10}x^2y^2;q^{16})}{j(-q^7x^2;q^8)j(-q^3y^2;q^8)}
\sum_{m\ge0}(-1)^m\Big ( -\frac{z^4}{q^{13}x^3y^3}\Big )^{m}q^{-56\binom{m+1}{2}} \\
&\sim
-\frac{z^3}{x^3y^2q^{15}}\cdot  \frac{J_{2,4}J_{8,16}j(qxy;q^8)j(q^{10}x^2y^2;q^{16})}{j(-q^7x^2;q^8)j(-q^3y^2;q^8)}
m\Big ( -\frac{z^4}{q^{13}x^3y^3},q^{56} , * \Big ). 
\end{align*}

Modulo a theta function, our heuristic methods suggest
{\allowdisplaybreaks \begin{align*}
&\mathfrak{g}_{1,3,1,3,3,1}(q,q,q,q)\\
&\sim 3\frac{J_{2,4}J_{8,16}J_{1,8}J_{6,16}}{\overline{J}_{1,8}\overline{J}_{5,8}} m\Big ( -q^{27},q^{56},*\Big  )
 +3q^{-2}\frac{J_{2,4}J_{8,16}J_{1,8}J_{6,16}}{\overline{J}_{1,8}\overline{J}_{5,8}} m\Big ( -q^{13},q^{56},*\Big  )
  \\
 &\ \ \ \ \  -3q^{-7}\frac{J_{2,4}J_{8,16}J_{3,8}J_{2,16}}{\overline{J}_{1,8}\overline{J}_{5,8}} 
 m\Big ( -q^{-1},q^{56},*\Big  )
 -3q^{-16}\frac{J_{2,4}J_{8,16}J_{3,8}J_{2,16}}{\overline{J}_{1,8}\overline{J}_{5,8}} 
 m\Big ( -q^{-15},q^{56},*\Big  )
 \\
 &\sim 3J_{1,2}\overline{J}_{3,8} m\Big ( -q^{27},q^{56},*\Big  )
 +3q^{-2}J_{1,2}\overline{J}_{3,8} m\Big ( -q^{13},q^{56},*\Big  ) \\
 &\ \ \ \ \  -3q^{-7}J_{1,2}\overline{J}_{1,8} m\Big ( -q^{-1},q^{56},*\Big  )
 -3q^{-16}J_{1,2}\overline{J}_{1,8} m\Big ( -q^{-15},q^{56},*\Big  ),
 \notag
 \end{align*}}%
which leads us to Identity (\ref{equation:g131331-conjecture}).

\section*{Acknowledgements}
This research was supported by Ministry of Science and Higher Education of the Russian Federation, agreement No. 075-15-2019-1619, and by the Theoretical Physics and Mathematics Advancement Foundation BASIS, agreement No. 20-7-1-25-1.

\end{document}